\newtheorem{thm}{Theorem}
\newtheorem{deff}[thm]{Definition}
\newtheorem{cor}[thm]{Corollary}
\newcommand{\ee}{\varepsilon}
\newcommand{\R}{\mathbb{R}}
\newcommand{\E}{\mathcal{E}}
\newcommand{\cD}{\mathcal{D}}
\newcommand{\cM}{\mathcal{F}}
\newcommand{\cF}{\mathcal{M}}
\newcommand{\Orliczdual}{\widetilde{\Omega}_{\phi}^{orlicz}}
\newcommand{\OrliczdualG}{\widetilde{G}_{\phi}^{orlicz}}
\begin{document}
%
\title{Orlicz addition for measures and an optimization problem for the $f$-divergence}
%
%
%

\author{Shaoxiong~Hou and
Deping~Ye
}

\maketitle

\begin{abstract}
 In this paper, the Orlicz addition of measures is proposed and an interpretation of the $f$-divergence is provided based on a linear Orlicz addition of two measures. Fundamental inequalities, such as, a dual functional Orlicz-Brunn-Minkowski inequality,  are established. We also investigate an optimization problem for the $f$-divergence and establish functional affine isoperimetric inequalities for the dual  functional  Orlicz affine and geominimal surface areas of measures.

\end{abstract}

\begin{IEEEkeywords}
affine isoperimetric inequality, affine surface area, Brunn-Minkowski theory, dual Brunn-Minkowski theory, the $f$-divergence, geominimal surface area, optimization problem for the $f$-divergence.
\end{IEEEkeywords}

%
\IEEEpeerreviewmaketitle

\section{introduction}

Let $\Omega$ be a nonempty set and $\mu$ be a measure on $\Omega$. Assume that $P$ and $Q$ are two finite measures on $\Omega$ whose  
density functions $p$ and $q$, respectively, with respect to $\mu$ are positive on $\Omega$. That is, $p, q>0$ such that $$P(\Omega)=\int_\Omega
  p\,d\mu<\infty \ \ \ \mathrm{and} \ \ \ Q(\Omega)=\int_\Omega
  q\,d\mu<\infty. $$ For a real valued function $f$, the  $f$-divergence of $P$ and $Q$,  denoted by $D_f(P,Q)$, was  introduced independently by Ali and Silvey \cite{as66}, Csisz\'{a}r \cite{c63} and Morimoto \cite{m63}. It can be formulated by \begin{equation}\label{2016-1-20-f-def}
D_f(P,Q)=
\int_{\Omega}f\left(\frac{p}
{q}\right)q\,d\mu.
\end{equation}
The $f$-divergence is an extension of the classical $L_p$ distance of measures and contains many widely-used distances for measures as its special cases,  e.g., Bhattcharyya distance,  Kullback-Leibler divergence, Renyi distance, $\chi^2$-distance  and total variation distance (see e.g. \cite{al61, bh43, fn14, fv06, kl51}). Moreover,  if $f$ is strictly convex with $f(1)=0$ and $P(\Omega)=Q(\Omega)\neq 0$, Jensen's inequality implies that,
 \begin{equation*}  D_f(P, Q)  =  \int_\Omega
  f\! \left(\!\frac{p}{q}\!\right)q\,d\mu\geq f\left(\!\frac{P(\Omega)}{Q(\Omega)}\!\right)\cdot Q(\Omega)=0,\end{equation*} with equality if and only if $p=q$ almost everywhere with respect to $Q$. When $f$ is strictly concave with $f(1)=0$, one gets similar results with ``$\geq$'' replaced by ``$\leq$". From this viewpoint, the $f$-divergence can be used to distinguish two measures. Without doubt, the $f$-divergence plays fundamental roles in, such as, image analysis,  information theory,  pattern matching  and statistical learning  (see \cite{bgv90, ct06, ht01, lv06, ov03}), where the measure of difference between measures is required. Moreover, in general, the $f$-divergence is arguably better than the $L_p$ distance. 

 Recent development in convex geometry has witnessed the strong connections between the $f$-divergence and convex geometry. For instance, it has been proved that the  $L_p$ affine surface area  \cite{Blas, Lu1, SWerner}, a central notion in convex geometry, is related to the Renyi entropy \cite{w12}; while the general affine surface area \cite{Ludwig2009, LR1} is associated to the $f$-divergence \cite{CaglarWerner}.  Note that these affine surface areas are valuations; and valuations are the key ingredients for the Dehn's solution of Hilbert third problem. Moreover, under certain conditions (such as, semicontinuity), it has been proved that these affine surface areas can be used to uniquely  characterize all valuations which remain unchanged under linear transforms with determinant $\pm 1$ (see e.g.  \cite{HaberlParapatits2014, LudR, LR1}). On the other hand, as showed in the Subsection \ref{optimal-geominimal}, the affine and geominimal surface areas (see e.g. \cite{Lu1, Petty74, y15,Ye2014a1})  can be translated to an optimization problem  for the $f$-divergence. This observation leads us to investigate the dual functional Orlicz affine and geominimal surface areas for measures, which are invariant  under linear transforms with determinant $\pm 1$.

The Brunn-Minkowsi inequality is arguably one of the most important inequalities in convex geometry.  It can be used to prove, for instance, the celebrated Minkowski's and isoperimetric inequalities. (Note that the isoperimetric problem has a history over 1000 years).  See the excellent survey \cite{gardner-b-m} for more details.  On the other hand, the dual Brunn-Minkowski inequality and dual Minkowski inequality are crucial for the solutions of the famous Busemann-Petty problem (see e.g., \cite{Gardner1994, GardnerKoldobski1999, Lut1988, Zhang1999}). The Brunn-Minkowsi inequality and its dual have been extended to the Orlicz theory in  \cite{ghw14, ghwy14, xjl14, zzx14}.

This paper is dedicated to provide a basic theory for the dual functional Orlicz-Brunn-Minkowski theory of measures and establish an interpertation for the $f$-divergence. In particular, we define the Orlicz addition of measures and prove the dual functional Orlicz-Brunn-Minkowski inequality. Moreover, we show that the $f$-divergence is the first order variation of the total mass of a measure obtained by a linear Orlicz addition of two measures. Further connections between the $f$-divergence and (convex) geometry are provided.  We also investigate an optimization problem for the $f$-divergence, and define the dual functional Orlicz affine and geominimal surface areas for measures. Related functional affine isoperimetric inequalities for the dual  functional  Orlicz affine and geominimal surface areas for measures are established.

\section{Orlicz addition for measures}
In this section, we define the Orlicz addition for multiple measures and discuss basic properties for the operation.  
 
\subsection{Orlicz addition for functions: definition and properties}

Throughout this paper, $\Omega$ is a nonempty set and $m\geq 1$ is an integer. Denote by $\cM$ the set of nonnegative real-valued measurable functions defined on $\Omega$.  We use $\cM^{+}$ to denote the set of all functions in $\cM$ which are positive, and $\cM^{+c}$  for the set of all functions in $\cM^+$ which are also continuous.

 Let $\Phi_m$ denote the set of all continuous functions $\varphi:[0,\infty)^m\to [0,\infty)$ that are strictly increasing in each component with $\varphi(o)= 0$ and $\lim_{t\to \infty} \varphi(tz) =\infty$ for each nonzero $z\in [0,\infty)^m$.  Hereafter $o=(0, \cdots, 0)$ stands for the origin of $\R^m$.
 Let $\Psi_m$ denote the set of all continuous functions $\varphi: (0,\infty)^m\to(0,\infty)$ that are strictly decreasing in each component with $\lim_{t\to 0} \varphi(tz) =\infty$ and $\lim_{t\to \infty} \varphi(tz) =0$ for each $z\in (0,\infty)^m$.   Note that $\varphi(x)=x_1^p+\cdots+x_m^p$ belongs to $\Phi_m$ if $p>0$ and belongs to $\Psi_m$ if $p<0$.

 The Orlicz addition of functions is defined as follows.
  \begin{deff}\label{orliczadditionmeasre-1}   For $\!\varphi \in \! \Phi_m$,   $\widetilde{+}_{\varphi}(p_1,\cdots, p_m)$,  the Orlicz addition of  functions  $p_1, \cdots\!, p_m\! \in\! \cM$,  is (uniquely and implicitly) defined by   \begin{equation}\label{Orldef-function}
\varphi\!\left(\! \frac{p_1(x)}{\widetilde{+}_{\varphi}
(p_1,\cdots\!,p_m)(x)},\cdots\!, \frac{p_m(x)}
{\widetilde{+}_{\varphi}(p_1,\cdots\!,p_m)(x)}\!\right) = 1,
\end{equation}
if  $p_1(x)+\cdots+p_m(x)>0$, and  otherwise by
$$\widetilde{+}_{\varphi}(p_1,\cdots,p_m)(x)=0.$$
If $\varphi\in \Psi_m$ and in addition $p_1, \cdots, p_m \in \cM^+$, the Orlicz addition  $\widetilde{+}_{\varphi}(p_1,\cdots, p_m)$ is defined by equation (\ref{Orldef-function}).
\end{deff}

 \noindent{\bf Remark.} Although in Definition \ref{orliczadditionmeasre-1}, the functions $p_1, \cdots, p_m$ are assumed to be measurable, equation (\ref{Orldef-function}) can also be used to define the Orlicz addition of general nonnegative functions. In an independent work \cite{gk2016}, Gardner and Kiderlen also provided the definition for the Orlicz addition of nonnegative functions. The second author of this paper would like to thank Professor Gardner for mentioning  \cite{gk2016} to him. It is worth to mention that the major concentrations of these two papers are completely different:  this paper focuses on the Orlicz addition of measures, an interpretation of the $f$-divergence and related inequalities; while the paper \cite{gk2016} mainly aims to provide a structural theory of operations between real-valued functions.

  Clearly if $\varphi\in {\Phi}_m$, then $\widetilde{+}_{\varphi}(p_1,\cdots, p_m)(x)=0$ implies that $p_1(x)=\cdots=p_m(x)=0$. Moreover,  $\widetilde{+}_{\varphi}(p_1,\cdots, p_m) \in \cM$ if $p_1,\cdots, p_m\in \cM$.  In later context, when $\varphi \in \Psi_m$, the functions $p_1, \cdots, p_m$ in $\widetilde{+}_{\varphi}(p_1,\cdots, p_m)$  are always assumed to be in $\cM^+$.

  It is worth to mention that if $\varphi\in \Phi_m$, $\widetilde{+}_{\varphi}(p_1,\cdots,p_m)(x)$ for $x\in \Omega$ given in Definition \ref{orliczadditionmeasre-1} is equal to the infimum of $\Lambda(x)\subset \R$, where  \begin{eqnarray*}\Lambda(x)=
 \left\{\lambda>0: \varphi\left(\frac{p_1(x)}{\lambda}
,\cdots,\frac{p_m(x)}{\lambda}\right)\le 1\right\}.
\end{eqnarray*} If $\varphi\in \Psi_m$ and $p_i\in \cM^+$ for all $i=1, \cdots, m$, then  $\widetilde{+}_{\varphi}(p_1,\cdots,p_m)(x)$ for $x\in \Omega$ is equal to the supremum of $\Lambda(x)\subset \R$. To this end, if $\varphi\in \Phi_m$ and $p_1(x)=\cdots=p_m(x)=0$, then $\Lambda(x)=\{\lambda: \lambda>0\}$ and hence $\inf \Lambda(x)=0$ as desired. Now assume that $\sum_{j=1}^mp_j(x)>0$ which yields $$(p_1(x), \cdots, p_m(x))\neq o.$$  It is easy to see that  $$\widetilde{+}_{\varphi}(p_1,\cdots,p_m)(x)\in \Lambda(x)$$ by formula (\ref{Orldef-function}).  On the other hand,  the fact that $\lim_{t\to \infty} \varphi(tz) =\infty$ for each nonzero $z\in [0,\infty)^m$  implies  $\widetilde{+}_{\varphi}(p_1,\cdots,p_m)(x)>0$.  Formula (\ref{Orldef-function}) together with the fact that $\varphi$ is strictly increasing in each component  imply that for all  $0<\lambda< \widetilde{+}_{\varphi}(p_1,\cdots, p_m)(x)$,  \begin{eqnarray*}
&&\varphi\left( \frac{p_1(x)}{\lambda}, \cdots, \frac{p_m(x)}{\lambda}\right)
> 1.
\end{eqnarray*} Thus, $ \Lambda(x)=\big[\widetilde{+}_{\varphi}(p_1,\cdots,p_m)(x), \infty\big)$ and $$\widetilde{+}_{\varphi}(p_1,\cdots,p_m)(x)=\inf \Lambda(x).$$ Along the same lines, one can get the desired argument for the case $\varphi\in \Psi_m$.

Now we prove the basic properties of $\widetilde{+}_{\varphi}(p_1,\cdots, p_m)(x)$  where $p_1, \cdots, p_m \in \cM$.

\begin{thm}\label{dualOrthm1}
Let $m\ge 2$ and $\varphi\in \Phi_m$.

\vskip 2mm
\noindent{\rm(i)} For $r\ge 0$, one has, $$\widetilde{+}_{\varphi}(rp_1,\cdots, rp_m)=r\cdot \widetilde{+}_{\varphi}(p_1,\cdots, p_m).$$

\noindent{\rm(ii)} Assume that  $\varphi\in {\Phi}_m$ satisfies $\varphi(e_j)=1$ for all $j=1, \cdots, m$, where $\{e_1, \cdots, e_m\}$ is the standard orthonormal basis of $\R ^m$. Then, for $j=1,\cdots,m$, one has  $$\widetilde{+}_{\varphi}(0,\cdots,0, p_j, 0, \cdots, 0)=p_j. $$

\noindent{\rm(iii)} If $q_1, \cdots, q_m\in \cM$ such that $p_j\leq q_j$ for  all $j=1, \cdots, m$, then $$\widetilde{+}_{\varphi}(p_1,\cdots, p_m)\leq \widetilde{+}_{\varphi}(q_1,\cdots, q_m).$$ In particular,
\begin{equation}\label{formulawithequal}  {\widetilde{+}_{\varphi}(p_1,\cdots\!,p_m)}\leq {\tau_0}^{-1}  \cdot {\sum_{j=1}^m p_j}, \end{equation} where  $\tau_0>0$ satisfies $\varphi(\tau_0, \cdots, \tau_0)=1$.

\vskip 2mm
\noindent{\rm(iv)} Assume that $p_{ij}\in \cM$ for $j=1, \cdots, m$ and $i=1, 2, \cdots,$ such that, for all $x\in \Omega$ and for all $j=1, \cdots, m$,
$$\lim_{i\rightarrow \infty} p_{ij}(x)= p_j(x).$$
Then, for all $x\in \Omega$,
$$\lim_{i\rightarrow \infty} \widetilde{+}_{\varphi}(p_{i1},\cdots, p_{im})(x)= \widetilde{+}_{\varphi}(p_1,\cdots, p_m)(x).$$

\noindent {\rm (v)} Let $p_{ij}$ be as in (iv) and let $S\subset \Omega$ be a compact set. Assume that all functions $p_{ij}$ are positive and continuous on $S$,  and the sequence $p_{ij}$ is uniformly convergent to $p_j\in \cM^+$ on $S$ as $i\rightarrow \infty$. Then $\widetilde{+}_{\varphi}(p_{i1},\cdots, p_{im})$ is convergent to $\widetilde{+}_{\varphi}(p_1,\cdots, p_m)$ uniformly on $S$ as $i\rightarrow \infty$.

The above statements except statement (ii) still hold true when $\varphi\in \Psi_m$ and all functions involved are positive, except that $r\geq 0$ should be replaced by $r>0$ in (i).
\end{thm}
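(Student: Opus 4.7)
The overall plan is to read each statement directly off the defining equation \eqref{Orldef-function}, or equivalently off the characterization $\widetilde{+}_{\varphi}(p_1,\ldots,p_m)(x)=\inf\Lambda(x)$ (respectively $\sup\Lambda(x)$ in the $\Psi_m$ case) established just before the theorem. For (i) with $r>0$, substitution shows that $r\lambda$ solves the defining equation for $(rp_1,\ldots,rp_m)$ whenever $\lambda$ solves it for $(p_1,\ldots,p_m)$; uniqueness, ensured by strict monotonicity of $\varphi$ in each coordinate, closes the argument, and the $r=0$ case is immediate from the second clause of Definition~\ref{orliczadditionmeasre-1}. For (ii), evaluating the defining equation at $\lambda=p_j(x)$ together with the hypothesis $\varphi(e_j)=1$ gives the identity where $p_j(x)>0$; where $p_j(x)=0$ the result is built into the definition. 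For the monotonicity in (iii) I would use the $\Lambda(x)$ characterization: $p_j\le q_j$ and monotonicity of $\varphi$ imply $\Lambda_q(x)\subset\Lambda_p(x)$, hence $\inf\Lambda_p(x)\le\inf\Lambda_q(x)$. The upper bound \eqref{formulawithequal} then follows by plugging $\lambda_0=\tau_0^{-1}\sum_j p_j(x)$ into $\varphi$: the coordinatewise estimate $p_j(x)/\lambda_0\le\tau_0$ combined with monotonicity yields $\varphi(p_1/\lambda_0,\ldots,p_m/\lambda_0)\le\varphi(\tau_0,\ldots,\tau_0)=1$, placing $\lambda_0$ in $\Lambda(x)$.

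Part (iv) is the main content, and the principal obstacle is ruling out degenerate subsequential limits. Fix $x\in\Omega$ and set $\lambda_i=\widetilde{+}_{\varphi}(p_{i1},\ldots,p_{im})(x)$ and $\lambda=\widetilde{+}_{\varphi}(p_1,\ldots,p_m)(x)$. If $\sum_j p_j(x)=0$ then every $p_j(x)=0$, so $p_{ij}(x)\to 0$ for each $j$, and \eqref{formulawithequal} forces $\lambda_i\le\tau_0^{-1}\sum_j p_{ij}(x)\to 0$. Otherwise, pick $j_0$ with $p_{j_0}(x)>0$; \eqref{formulawithequal} ensures $\{\lambda_i\}$ is bounded, so it suffices to show that every convergent subsequence $\lambda_{i_k}\to\lambda^{*}$ satisfies $\lambda^{*}=\lambda$. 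The case $\lambda^{*}=0$ is excluded because then $p_{i_k,j_0}(x)/\lambda_{i_k}\to\infty$, so monotonicity of $\varphi$ in each coordinate together with $\lim_{t\to\infty}\varphi(te_{j_0})=\infty$ forces $\varphi(p_{i_k,1}(x)/\lambda_{i_k},\ldots,p_{i_k,m}(x)/\lambda_{i_k})\to\infty$, contradicting its equality with $1$. For $\lambda^{*}\in(0,\infty)$, continuity of $\varphi$ passes the defining equation to the limit, so $\lambda^{*}$ solves $\varphi(p_1(x)/\lambda^{*},\ldots,p_m(x)/\lambda^{*})=1$; uniqueness then forces $\lambda^{*}=\lambda$.

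For (v), I would promote pointwise convergence to uniform convergence by showing that the map $F:(0,\infty)^m\to(0,\infty)$ determined by $\varphi(t_1/F(t),\ldots,t_m/F(t))=1$ is continuous; this is the specialization of (iv) to constant functions on a one-point $\Omega$. Since the $p_j$ are uniform limits on $S$ of positive continuous functions, fix $0<A<B<\infty$ with $p_j(x)\in[A,B]$ for all $x\in S$ and all $j$, and note that uniform convergence places $p_{ij}(x)\in[A/2,2B]$ on $S$ for all sufficiently large $i$. Uniform continuity of $F$ on the compact box $[A/2,2B]^m$ then transfers the uniform closeness of $(p_{i1}(x),\ldots,p_{im}(x))$ to $(p_1(x),\ldots,p_m(x))$ into uniform closeness of the values $\widetilde{+}_{\varphi}(p_{i1},\ldots,p_{im})(x)$ and $\widetilde{+}_{\varphi}(p_1,\ldots,p_m)(x)$.

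For the $\Psi_m$ case, each argument above adapts by replacing $\inf\Lambda(x)$ with $\sup\Lambda(x)$ and reversing the direction of monotonicity of $\varphi$; in (iii) the inclusion becomes $\Lambda_p(x)\subset\Lambda_q(x)$ but the ordering of the Orlicz sums is unchanged, and the analog of \eqref{formulawithequal} is the reversed lower bound $\widetilde{+}_{\varphi}(p_1,\ldots,p_m)\ge\tau_0^{-1}\sum_j p_j$. In (iv) the role of $\lim_{t\to\infty}\varphi(tz)=\infty$ is taken over by $\lim_{t\to 0}\varphi(tz)=\infty$, which now rules out $\lambda^{*}=\infty$, while the reversed bound rules out $\lambda^{*}=0$. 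Statement (ii) has no $\Psi_m$-analog since $\varphi$ is undefined at the boundary points $e_j$, and for the same reason $r>0$ must replace $r\ge 0$ in (i).
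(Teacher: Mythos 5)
Your arguments for (i)--(iv) in the $\Phi_m$ case follow the paper's route, with your (iv) being in fact somewhat more careful: you explicitly extract a convergent subsequence via the upper bound \eqref{formulawithequal} and rule out a zero limit using $\lim_{t\to\infty}\varphi(te_{j_0})=\infty$, whereas the paper passes to the limit rather tersely. For (v) your approach is genuinely different and cleaner than the paper's: you introduce the implicitly defined function $F$ on $(0,\infty)^m$ with $\varphi(t_1/F(t),\ldots,t_m/F(t))=1$, observe that (iv) applied to constant sequences gives continuity of $F$, and then invoke uniform continuity of $F$ on a compact box together with the uniform convergence of the inputs. The paper instead argues by contradiction, picking points $x_{n_i}\in S$ where uniform convergence fails, extracting a convergent subsequence by compactness of $S$, and passing to the limit in the defining equation. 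Your argument localizes the compactness to the range rather than the domain and avoids the contradiction setup; both are correct.

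There is, however, a genuine error in your treatment of the $\Psi_m$ case. You assert that the analog of \eqref{formulawithequal} is the reversed inequality $\widetilde{+}_{\varphi}(p_1,\ldots,p_m)\ge\tau_0^{-1}\sum_j p_j$, but this is false: the same \emph{upper} bound holds for $\varphi\in\Psi_m$. Indeed, the monotonicity in (iii) is the same for both classes (as you yourself note, the ordering of the Orlicz sums is unchanged), so taking $q_1=\cdots=q_m=\sum_l p_l$ still gives $\widetilde{+}_{\varphi}(p_1,\ldots,p_m)\le\widetilde{+}_{\varphi}(\sum_l p_l,\ldots,\sum_l p_l)=\tau_0^{-1}\sum_l p_l$. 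A concrete check: for $\varphi(x_1,x_2)=x_1^{-1}+x_2^{-1}\in\Psi_2$ one has $\tau_0=2$, and with $p_1=p_2=1$ the Orlicz sum is $1/2$, which is $\le\tau_0^{-1}(p_1+p_2)=1$, not $\ge$. This error propagates into your $\Psi_m$ adaptation of (iv), where you rely on the nonexistent reversed bound to rule out $\lambda^{*}=0$. The correct repair uses the other limit condition in the definition of $\Psi_m$, namely $\lim_{t\to\infty}\varphi(tz)=0$: if $\lambda_{i_k}\to 0$ with $p_{i_k,j}(x)\to p_j(x)>0$, then $p_{i_k,j}(x)/\lambda_{i_k}\to\infty$, so $\varphi\bigl(p_{i_k,1}(x)/\lambda_{i_k},\ldots,p_{i_k,m}(x)/\lambda_{i_k}\bigr)\to 0$, contradicting its equality with $1$. (Alternatively, $\widetilde{+}_{\varphi}(p_{i1},\ldots,p_{im})\ge\widetilde{+}_{\varphi}(\min_l p_{il},\ldots,\min_l p_{il})=\tau_0^{-1}\min_l p_{il}$ gives a genuine positive lower bound.) With that correction the rest of your $\Psi_m$ adaptation is sound.
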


\begin{proof}  We only prove the results for $\varphi\in \Phi_m$, and the case $\varphi\in \Psi_m$ follows along the same lines.

\vskip2mm \noindent{\rm(i)} The equality holds trivially if $r=0$ or $\sum_{j=1}^m p_j(x)=0$.  Let $x\in \Omega$ be such that $\sum_{j=1}^m p_j(x)>0$. The desired equality for $r>0$ follows from the fact that for all $(a_1, \cdots, a_m)\neq o$, the equation $$\varphi\bigg(\frac{a_1}{\lambda}, \cdots, \frac{a_m}{\lambda}\bigg)=1$$ has a unique solution  and the fact that 
\begin{eqnarray*}1\!\!&=&\!\! \varphi\!\left(\! \frac{rp_1(x)}{\widetilde{+}_{\varphi}
(rp_1,\cdots\!, rp_m)(x)},\cdots\!, \frac{rp_m(x)}
{\widetilde{+}_{\varphi}(rp_1,\cdots\!, rp_m)(x)}\!\right)\\ \!\! &=&\!\! \varphi\!\left(\! \frac{p_1(x)}{\left(\frac{\widetilde{+}_{\varphi}
(rp_1,\cdots\!, rp_m)(x)}{r}\right)},\cdots\!, \frac{p_m(x)}{\left(\frac{\widetilde{+}_{\varphi}
(rp_1,\cdots\!, rp_m)(x)}{r}\right)}\!\right).
\end{eqnarray*}

\noindent{\rm(ii)} If $p_j(x)=0$, then $\widetilde{+}_{\varphi}(0, \cdots, 0, p_j, 0, \cdots, 0)(x)=0$. Assume that $p_j(x)\neq 0$. Formula (\ref{Orldef-function}) implies that  $$\varphi\left( 0,\cdots, 0, \frac{p_j(x)}{\widetilde{+}_{\varphi}(0,\cdots, 0, p_j, 0, \cdots, 0)(x)}, 0, \cdots, 0\right) = 1.$$ Together with the facts that $\varphi(e_j)=1$ and $\varphi$ is strictly increasing in each component, one gets  $$\widetilde{+}_{\varphi}(0,\cdots,0,p_j,0,\cdots,0)(x)=p_j(x).$$

\noindent{\rm(iii)} The desired result holds trivially if $\sum_{j=1}^m p_j(x)=0$. Assume that $p_j\leq q_j$ for all $j=1, \cdots, m$ and  $\sum_{j=1}^m p_j(x)>0$. Note that $\varphi$ is strictly increasing in each component. By equation (\ref{Orldef-function}), one has,  
 \begin{eqnarray*}1\!\!&=&\!\!\varphi\!\left(\! \frac{q_1(x)}{\widetilde{+}_{\varphi}
(q_1,\cdots\!, q_m)(x)},\cdots\!, \frac{q_m(x)}
{\widetilde{+}_{\varphi}(q_1,\cdots\!, q_m)(x)}\!\right) \\ \!\!&=&\!\!\varphi\!\left(\! \frac{p_1(x)}{\widetilde{+}_{\varphi}
(p_1,\cdots\!,p_m)(x)},\cdots\!, \frac{p_m(x)}
{\widetilde{+}_{\varphi}(p_1,\cdots\!,p_m)(x)}\!\right) \\ \!\!&\leq&\!\! \varphi\!\left(\! \frac{q_1(x)}{\widetilde{+}_{\varphi}
(p_1,\cdots\!,p_m)(x)},\cdots\!, \frac{q_m(x)}
{\widetilde{+}_{\varphi}(p_1,\cdots\!,p_m)(x)}\!\right).
\end{eqnarray*} Again by the fact that $\varphi$ is strictly increasing in each component, one gets  $$\widetilde{+}_{\varphi}(p_1,\cdots, p_m)\leq \widetilde{+}_{\varphi}(q_1,\cdots, q_m).$$

In particular, let $q_1=\cdots=q_m=\sum_{j=1}^{m} p_j$, then
$$\widetilde{+}_{\varphi}(p_1,\cdots, p_m)\leq \widetilde{+}_{\varphi}\bigg(\sum_{j=1}^{m} p_j,\cdots, \sum_{j=1}^{m} p_j \bigg).$$ The right hand side is equal to ${\tau_0}^{-1}  \cdot {\sum_{j=1}^m p_j}$ which follows directly from $\varphi(\tau_0, \cdots, \tau_0)=1$.

\vskip 2mm
 \noindent{\textcolor{red}\rm(iv)}  Assume that  $\sum_{j=1}^m p_j(x)=0$. As $$\lim_{i\rightarrow \infty} p_{ij}(x)=p_j(x),$$ then for all $\epsilon>0$, there is $i(\epsilon)\in \mathbb N$, such that for $i>i(\epsilon)$, $$ \sum_{j=1}^m p_{ij}(x)<\epsilon.$$
By formula (\ref{formulawithequal}), one has, for all $i>i(\epsilon)$, \begin{eqnarray*} 0&\leq&
{\widetilde{+}_{\varphi}(p_{i1},\cdots\!,p_{im})(x)}\\  &\leq& {\tau_0}^{-1}  \cdot {\sum_{j=1}^m p_{ij}(x)} <  {\tau_0}^{-1}  \epsilon.
\end{eqnarray*} Consequently, one has,
$$\lim_{i\rightarrow \infty} {\widetilde{+}_{\varphi}(p_{i1},\cdots\!,p_{im})(x)}=0=\widetilde{+}_{\varphi}
(p_1,\cdots\!,p_m)(x)$$ because $\sum_{j=1}^m p_j(x)=0$.

Now assume that $\sum_{j=1}^m p_j(x)>0$ and $\lim_{i\rightarrow \infty} p_{ij}(x)= p_j(x)$. Then, $$\lim_{i\rightarrow \infty} \sum_{j=1}^m p_{ij}(x)= \sum_{j=1}^{m} p_j(x)>0.$$ Then there is $i_0\in \mathbb N$, such that, $\sum_{j=1}^m p_{ij}(x)>0$  for all $i>i_0$ and hence
 \begin{eqnarray*}1\!\!&=&\!\! \varphi\!\left(\! \frac{p_{i1}(x)}{\widetilde{+}_{\varphi}
(p_{i1},\cdots\!, p_{im})(x)},\cdots\!, \frac{p_{im}(x)}
{\widetilde{+}_{\varphi}(p_{i1},\cdots\!, p_{im})(x)}\!\right).
\end{eqnarray*} Taking the limit as $i\rightarrow \infty$, the desired conclusion follows from the continuity of $\varphi$ and the uniqueness of the solution of \eqref{Orldef-function}. That is,  for $x\in \Omega$ such that $\sum_{j=1}^m p_j(x)>0$, $$\lim_{i\rightarrow \infty} \widetilde{+}_{\varphi}(p_{i1},\cdots, p_{im})(x)= \widetilde{+}_{\varphi}(p_1,\cdots, p_m)(x).$$

\noindent{\rm(v)} Assume that all functions $p_{ij}$ are positive and continuous on $S$, and the sequence $p_{ij}$ is uniformly convergent to $p_j\in \cM^+$ on $S$. Then, there exist $c_1, c_2>0$ such that for all $x\in S$, $c_1\leq p_j(x)\leq c_2$ and $c_1\le p_{ij}(x)\le c_2$ for all $i$ and $j$. Part (iv) implies that  $\widetilde{+}_{\varphi}(p_{i1},\cdots, p_{im})$ converges to $\widetilde{+}_{\varphi}(p_{1},\cdots, p_{m})$ pointwisely on $S$.

 If the convergence is not uniform on $S$, then there exist $\ee_0>0$ and $n_i>i$, such that,  $x_{n_i}\in S$ with $x_{n_i}\rightarrow x_0$ (due to the compactness of $S$), and
\begin{equation}\label{dy1}
\left|\widetilde{+}_{\varphi}(p_{n_i1},\!\cdots\!, p_{n_im})(x_{n_i}\!)-
\widetilde{+}_{\varphi}(p_{1},\!\cdots\!, p_{m})(x_{n_i}\!)\right|\ge \ee_0.
\end{equation}
Part (iii) and the fact $\varphi(\tau_0, \cdots, \tau_0)=1$ imply
\begin{equation*}
c_1/\tau_0 \le \widetilde{+}_{\varphi}(p_{n_i1},\cdots, p_{n_im})(x)\le c_2/\tau_0,
\end{equation*} That is,
$\{\widetilde{+}_{\varphi}(p_{n_i1},\cdots,p_{n_im})(x_{n_i})\}_{i\in \mathbb{N}}$ is a
bounded sequence and hence has a convergent subsequence. Without loss of generality, assume that
$$\lim_{i\rightarrow\infty}  \widetilde{+}_{\varphi}(p_{n_i1},\cdots,p_{n_im})(x_{n_i})=c_0>0,$$ where  $c_0>0$ is a constant. This together with $x_{n_i}\rightarrow x_0$ and $p_{n_ij}\rightarrow p_j$ uniformly on $S$ further imply that
\begin{eqnarray*} 1\!\!\! &=&\!\!\!
\varphi\!\left(\!\frac{p_{n_i1}(x_{n_i})}{\widetilde{+}_{\varphi}
(p_{n_i1},\cdots\!, p_{n_im})(x_{n_i}\!)},\cdots\!,\frac{p_{n_im}(x_{n_i})}
{\widetilde{+}_{\varphi}(p_{n_i1},\cdots\!, p_{n_im})(x_{n_i}\!)}\!\right)
 \\ &\!\!\!\rightarrow&\!\!\!
\varphi\left(\frac{p_{1}(x_{0})}{c_0},\cdots,\frac{p_{m}(x_0)}
{c_0}\right), \ \ \mathrm{as} \  i\rightarrow \infty.
\end{eqnarray*}
It follows that $c_0=\widetilde{+}_{\varphi}(p_{1},\cdots,p_{m})(x_{0})$, which leads to a contradiction with \begin{eqnarray*}
 |c_0-\widetilde{+}_{\varphi}(p_{1},\cdots, p_{m})(x_{0})|\ge\ee_0,
\end{eqnarray*} after taking $i\rightarrow \infty$ from both sides of (\ref{dy1}).
Hence, the desired uniform convergence follows. \end{proof}

\subsection{Orlicz addition for star bodies and convex bodies}

Our definition of the Orlicz addition for measures is motivated by the recently introduced Orlicz additions for  convex bodies and star bodies, which are the foundation of the newly initiated Orlicz-Brunn-Minkowski theory for convex bodies and its dual theory \cite{ghw14, ghwy14, xjl14, zzx14}.

In this subsection, we briefly discuss these Orlicz additions in geometry and show how it can be linked with our Orlicz addition for functions. Notations and concepts for geometry below are standard, and more details can be found in \cite{s14}.

Denote by $S^{n-1}=\{(u_1, \cdots, u_n)\in \R^n: \sum_{i=1}^n u_i^2=1\}$  the unit sphere of $\R^n$.  Throughout this paper, a subset $K\subset \R ^n$ is star-shaped if, for all $x\in K$, the line segment from the origin $o$ to $x$ is contained in $K$. The radial
function of  a star-shaped subset $K$, $\rho_K: S^{n-1}\rightarrow \R$, is defined by
$$
\rho_{K}(u)= \max\{c\ge 0:cu\in K\}, \ \ \  u\in S^{n-1}.
$$ The radial function can be extended to $\R ^n\setminus \{o\}$ by $\rho_K(ru)=r^{-1}\rho_K(u)$ for all $r>0$ and $u\in S^{n-1}$. Note that such an extension is of homogenous degree $-1$.  A star body $K$ is a star-shaped subset with continuous and positive radial function $\rho_K$. Clearly, a star body $K$ is compact with $o$ in its interior.

The radial Orlicz sum of star bodies $K_1, \cdots, K_m$, denoted by $\widetilde{+}_{\varphi}(K_1,\dots,K_m)$, is determined by its radial function $\rho_{\widetilde{+}_{\varphi}(K_1,\dots,K_m)}$, the unique solution of the following equation \cite{ghwy14}: for $u\in S^{n-1}$, \begin{equation}\label{Orldef}
\varphi\left( \frac{\rho_{K_1}(u)}{\rho_{\widetilde{+}_{\varphi}(K_1,\dots,K_m)}(u)},\dots, \frac{\rho_{K_m}(u)}{\rho_{\widetilde{+}_{\varphi}(K_1,\dots,K_m)}(u)}\right) = 1.
\end{equation}   Formula (\ref{Orldef}) can also be used to define the radial Orlicz sum of star bodies $K_1, \cdots, K_m$ for $\varphi\in \Psi_m$.  In fact, the radial Orlicz sum of star bodies $K_1, \cdots, K_m$ defined by formula  (\ref{Orldef}) is a special case of the Orlicz addition of functions defined by formula (\ref{Orldef-function}); it can be obtained by  letting $\Omega=S^{n-1}$ and  $p_j=\rho_{K_j}$.  Alternatively, it can be  also obtained by  letting $\Omega=\R^n\setminus \{o\}$ and $p_j(x)=r^{-1} \rho_{K_j}(u)$ for all $x=ru\neq o$.

The set $K\subset \R^n $ is said to be a convex body if $K$ is a star body such that for all $x, y\in K$, the line segment from $x$ to $y$ is contained in $K$. An arguably better way to characterize convex body $K$ is its support function $h_K: S^{n-1}\rightarrow \{t: t\geq 0\}$, which can be defined by: $$h_K(u)=\max_{x\in K} \langle x, u\rangle, \ \ \ \mathrm{for}\ u\in S^{n-1}.$$ Clearly, $h_K$ is a sublinear function. Let $\Omega=S^{n-1}$, $p_j=h_{K_j}$, and let  $\varphi\in \Phi_m$  be convex,  then formula (\ref{Orldef-function}) becomes:  for $u\in S^{n-1}$,
 \begin{equation}\label{orlicz-addition-convex--11----111}
\varphi\left(\frac{h_{K_1}(u)}{h_{{+}_{\varphi}(K_1,\dots,K_m)}(u)},\dots, \frac{h_{K_m}(u)}{h_{{+}_{\varphi}(K_1,\dots,K_m)}(u)}\right) = 1.
\end{equation} The unique solution of (\ref{orlicz-addition-convex--11----111}) is exactly the support function of ${+}_{\varphi}(K_1,\dots,K_m)$, the Orlicz addition of convex bodies $K_1, \cdots, K_m$ \cite{ghw14}.

\subsection{Orlicz addition for measures and a dual functional Orlicz-Brunn-Minkowski inequality}
Let $\mu$ be a given measure on $\Omega$ such that $\mu(\Omega)\neq 0$. Denote by $\cF$ the set of finite measures on $\Omega$ that are absolutely continuous with respect to $\mu$ and whose density functions with respect to $\mu$ are in  $\cM$. That is, $P\in \cF$ has the density function $p$ with respect to $\mu$ such that   $p\in \cM$,  $P(\Omega)<\infty$,  and $$P(A)=\int_{A}p(x)\,d\mu(x), \ \ \mathrm{for\  all\ measurable}\ A\subseteq \Omega.$$ In this paper, we always assume that $\cF\neq \emptyset$. Let $\cF^{+}$ and $\cF^{+c}$ denote the sets of all measures in $\cF$ whose density functions are in $\cM^+$ and  in $\cM^{+c}$, respectively. Note that  $\widetilde{+}_{\varphi}(p_1,\cdots, p_m) \in \cM$ and    \begin{eqnarray*} \int _{\Omega} \widetilde{+}_{\varphi}(p_1,\cdots, p_m)(x)\,d\mu(x) \!\!&\leq&\!\!  \int _{\Omega} {\tau_0}^{-1}  \cdot {\sum_{j=1}^m p_j(x)} \,d\mu(x)\\ \!\!&=&\!\!{\tau_0}^{-1}  \cdot {\sum_{j=1}^m  \int _{\Omega}  p_j(x)} \,d\mu(x) \\ \!\!&<&\!\! \infty, \end{eqnarray*} where the first inequality follows from inequality (\ref{formulawithequal}). That is, $\widetilde{+}_{\varphi}(p_1,\cdots, p_m)$ can be the density function of a measure in $\cF$.  This observation leads to our definition for the Orlicz addition of measures.

  \begin{deff}\label{orliczadditionmeasre-2} Let $P_1, \cdots, P_m\in \cF$ with density functions $p_1, \cdots, p_m\in \cM$.  For $\varphi\in \Phi_m$,  the Orlicz addition of measures $P_1, \cdots, P_m$, denoted by $\widetilde{+}_{\varphi}(P_1,\cdots, P_m)$, is the measure in $\cF$ whose density function   is $\widetilde{+}_{\varphi}(p_1,\cdots\!,p_m)$.  Similarly, the Orlicz addition of $P_1, \cdots, P_m\in \cF^+$ for $\varphi\in \Psi_m$   is a measure in $\cF^+$ whose density function   is $\widetilde{+}_{\varphi}(p_1,\cdots\!,p_m)$. \end{deff}

 \noindent{\bf Remark.} Clearly, if  $P_1, \cdots, P_m\in \cF^+$ (or $\cF^{+c}$, respectively), then $\widetilde{+}_{\varphi}(P_1,\cdots, P_m)\in \cF^+$ (or $\cF^{+c}$, respectively).  In later context,  for $\varphi \in \Psi_m$, the measures $P_1, \cdots, P_m$ in $\widetilde{+}_{\varphi}(P_1,\cdots, P_m)$ are always assumed to be in $\cF^+$.

The following theorem provides a dual functional Orlicz-Brunn-Minkowski inequality for the Orlicz addition of measures.
\begin{thm}\label{dualOBMI1} Let $m\ge 2$ and let $P_j\in \cF$ with density functions $p_j$ for $j=1, \cdots, m$.  Assume that  $A\subset \Omega$ is measurable with $\mu(A)\neq 0$ such that $\sum_{j=1}^mp_j(x)>0$ for $x\in A$ almost everywhere with respect to $\mu$. If  $\varphi\in \Phi_m\cup \Psi_m$ is concave, then
\begin{equation}\label{obmi}
\varphi\left(\!\frac{P_1(A)}
{\widetilde{+}_{\varphi}(P_1,\cdots\!, P_m)(A)},\cdots\!,\frac{P_m(A)}
{\widetilde{+}_{\varphi}(P_1,\cdots\!, P_m)(A)}\! \right)\ge 1.
\end{equation} If $\varphi\in \Phi_m\cup \Psi_m$ is convex, the inequality holds with $\ge$ replaced by $\le$.

If $\varphi\in \Phi_m\cup \Psi_m$ is strictly concave or convex,
and $P_1, \cdots, P_m \in \cF^{+c}$,
equality holds if and only if there are constants $a_j>0$ such that  $p_j=a_j p_1$ for $2\leq j\leq m$. \end{thm}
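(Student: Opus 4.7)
My plan is to derive the inequality as a direct application of Jensen's inequality to the implicit defining equation (\ref{Orldef-function}), with respect to a probability measure built from the density of the Orlicz sum. Write $P = \widetilde{+}_{\varphi}(P_1,\dots,P_m)$ and let $p$ be its density; by Definition~\ref{orliczadditionmeasre-2}, $p = \widetilde{+}_{\varphi}(p_1,\dots,p_m)$. The hypothesis $\sum_{j=1}^m p_j(x)>0$ for $\mu$-a.e.\ $x\in A$ forces $p(x)>0$ on the same set (the implicit equation cannot be satisfied at $\lambda=0$), so $P(A)=\int_A p\,d\mu>0$. I can therefore define a probability measure $\nu$ on $A$ by $d\nu(x)= p(x)/P(A)\, d\mu(x)$.

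The first step is the observation that formula (\ref{Orldef-function}) says
\begin{equation*}
\varphi\!\left(\frac{p_1(x)}{p(x)},\dots,\frac{p_m(x)}{p(x)}\right)=1 \qquad \text{for $\mu$-a.e. } x\in A,
\end{equation*}
so integrating against $\nu$ gives $\int_A \varphi(p_1/p,\dots,p_m/p)\,d\nu=1$. Applying multivariate Jensen's inequality to the concave $\varphi$ yields
\begin{equation*}
1=\int_A\!\varphi\!\left(\tfrac{p_1}{p},\dots,\tfrac{p_m}{p}\right)d\nu \le \varphi\!\left(\int_A \tfrac{p_1}{p}\,d\nu,\dots,\int_A \tfrac{p_m}{p}\,d\nu\right).
\end{equation*}
The $j$-th coordinate inside $\varphi$ simplifies via the change of measure:
\begin{equation*}
\int_A \frac{p_j(x)}{p(x)}\,d\nu(x) = \frac{1}{P(A)}\int_A p_j(x)\,d\mu(x) = \frac{P_j(A)}{P(A)},
\end{equation*}
which gives inequality (\ref{obmi}). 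For convex $\varphi$ the Jensen inequality reverses, producing the dual statement.

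For the equality case, assume $\varphi$ is strictly concave (or strictly convex) and $P_1,\dots,P_m\in\cF^{+c}$. Strict Jensen gives equality if and only if the integrand $x\mapsto (p_1(x)/p(x),\dots,p_m(x)/p(x))$ is $\nu$-a.e.\ constant on $A$; since $p$ is positive and continuous on $A$ (inherited from the $p_j$'s via the implicit definition and the continuity statement in Theorem~\ref{dualOrthm1}(v)), $\nu$-a.e.\ constancy upgrades to pointwise constancy of each ratio, so $p_j = c_j\, p$ for constants $c_j>0$, and hence $p_j = a_j\, p_1$ with $a_j = c_j/c_1$ for $2\le j\le m$. For the converse, if $p_j=a_j p_1$, the homogeneity property Theorem~\ref{dualOrthm1}(i) shows $p = \tau p_1$ for a constant $\tau>0$ determined by $\varphi(1/\tau,a_2/\tau,\dots,a_m/\tau)=1$, and substituting the resulting identities $P_j(A)=a_j P_1(A)$ and $P(A)=\tau P_1(A)$ into the left side of (\ref{obmi}) recovers the defining relation of $\tau$, giving equality.

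The main technical point to be careful about is that the defining equation (\ref{Orldef-function}) only gives information on the subset of $A$ where $\sum p_j>0$; the $\mu$-null complement must be discarded before integrating, and one has to verify that this does not affect either $P(A)$ or any of the $P_j(A)$. Apart from that, the argument is a clean Jensen computation; nothing else genuinely new is needed. The same argument applies verbatim to $\varphi\in\Psi_m$ with $P_1,\dots,P_m\in\cF^+$ since the implicit equation takes the same form.
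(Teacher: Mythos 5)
Your proposal is correct and follows essentially the same route as the paper: you build the probability measure $d\nu = p/P(A)\,d\mu$ on $A$ from the density $p=\widetilde{+}_{\varphi}(p_1,\dots,p_m)$ of the Orlicz sum, integrate the defining implicit equation~(\ref{Orldef-function}) against $\nu$, and apply Jensen's inequality (with the equality case handled by Jensen's equality condition together with continuity and positivity of the ratios $p_j/p$ on $A$). The only cosmetic difference is that the paper cites inequality~(\ref{formulawithequal}) to guarantee $0<P(A)<\infty$ and invokes Jensen's bidirectional equality statement directly rather than checking the converse by hand, but the substance is identical.
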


\begin{proof} Let  $\varphi\in \Phi_m$ and $\sum_{j=1}^mp_j(x)>0$ for $x\in A$ almost everywhere with respect to $\mu$. By  inequality (\ref{formulawithequal}), for $x\in A$ almost everywhere with respect to $\mu$, $$0<\widetilde{+}_{\varphi}(p_1,\cdots, p_m)(x)\leq \tau_0^{-1} \sum_{j=1}^m p_j(x).$$ Together with $\mu(A)\neq 0$, one has  $$0<\widetilde{+}_{\varphi}(P_1,\cdots\!, P_m)(A)<\infty.$$  Hence, we can define a  probability measure $\,d\nu$  on $A$ by  $$d \nu=\frac{\widetilde{+}_{\varphi}(p_1,\cdots,
p_m)}{ \widetilde{+}_{\varphi}(P_1,\cdots, P_m)(A)}
\,d\mu.$$ Assume that  $\varphi\in \Phi_m$ is concave.  By (\ref{Orldef-function}) and
Jensen's inequality (see e.g. Proposition 2.2 in \cite{ghwy14}), one has,
\begin{eqnarray*}
1\!\!\! &=&\!\!\!\! \int_{A}\varphi\!\left(\!\frac{p_1(x)}
{\widetilde{+}_{\varphi}(p_1,\cdots\!, p_m)(x)}, \cdots\!,
\frac{p_m(x)}{\widetilde{+}_{\varphi}(p_1,\cdots\!, p_m)(x)}
\!\right)\!\!\,d\nu(x)\\
\!\!\!
&\leq&\!\!\! \varphi\! \left(\int_{A}\frac{p_1(x)}
{\widetilde{+}_{\varphi}(p_1,\cdots, p_m)(x)}\,
d\nu(x),\cdots \right. \\ &&\ \ \ \ \ \ \ \left. \cdots, \int_{A}\frac{p_m(x)}
{\widetilde{+}_{\varphi}(p_1,\cdots, p_m)(x)}\,
d\nu(x)\right)\\
\!\!\!
&=&\!\!\! \varphi\left(\frac{P_1(A)}
{\widetilde{+}_{\varphi}(P_1,\cdots, P_m)(A)},\cdots,
\frac{P_m(A)}
{\widetilde{+}_{\varphi}(P_1,\cdots, P_m)(A)}\right).
\end{eqnarray*} If $\varphi\in \Phi_m$ is a convex function, the above inequality holds with $\leq$ replaced by $\geq$.

Assume that $\varphi$ is strictly concave or strictly convex. Note that
 $P_j\in \cF^{+c}$ has continuous and positive density functions $p_j$ for all $j=1,\cdots,m$. This yields that  $$\frac{p_j}{\widetilde{+}_{\varphi}(p_1,\cdots,
p_m)} \ \ \ \mathrm{for\ all}\ j=1,\cdots, m$$   are positive and continuous on $A$. Hence, equality holds in \eqref{obmi} if and only if there are constants $b_j>0$, such that, for all $x\in A$ and for all $j=1,\cdots, m$,  $$\frac{p_j(x)}{\widetilde{+}_{\varphi}(p_1,\cdots,
p_m)(x)}=b_j. $$ Equivalently, there are constants $a_j>0$ such that  $p_j=a_j p_1$ for $2\leq j\leq m$.

The proof for the case $\varphi\in \Psi_m$ follows along the same lines, and hence is omitted.  \end{proof}

\begin{cor} Let $m\ge 2$ and let $P_j\in \cF$ with density functions $p_j$ for $j=1, \cdots, m$.  Assume that  $A\subset \Omega$ is measurable such that $P_{j_0}(A)>0$ for  some $j_0\leq m$. If $\cF^+\neq \emptyset$ or $\mu(A)<\infty$, then for any  concave function $\varphi$  in $\Phi_m$ such that $\varphi(e_j)=1$ for $j=1, \cdots, m$, one has,
\begin{equation}\label{obmi-cor}
\varphi\left(\!\frac{P_1(A)}
{\widetilde{+}_{\varphi}(P_1,\cdots\!, P_m)(A)},\cdots\!,\frac{P_m(A)}
{\widetilde{+}_{\varphi}(P_1,\cdots\!, P_m)(A)}\! \right)\ge 1,
\end{equation} while the inequality holds with $\ge$ replaced by $\le$ if $\varphi\in \Phi_m$ is convex. \end{cor}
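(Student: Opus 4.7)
The plan is to reduce the corollary to Theorem~\ref{dualOBMI1} by perturbing the densities so that $\sum_j p_j$ becomes strictly positive on $A$, and then passing to the limit. The gap between the corollary's hypothesis ($P_{j_0}(A)>0$ for some single index $j_0$) and the theorem's hypothesis ($\sum_j p_j>0$ almost everywhere on $A$) is exactly what forces the dichotomy ``$\cF^+\neq\emptyset$ or $\mu(A)<\infty$'': these are precisely the two situations in which a strictly positive integrable perturbation is available.

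First I would construct the perturbation. In the case $\cF^+\neq\emptyset$, I pick any $Q\in\cF^+$ with positive density $q$; in the case $\mu(A)<\infty$, I take $q=\mathbf{1}_A$, which is measurable and integrable with respect to $\mu$. Setting $p_j^{(i)}=p_j+\tfrac{1}{i}q$ and letting $P_j^{(i)}\in\cF$ be the measure with density $p_j^{(i)}$, I have $p_j^{(i)}>0$ on $A$, so $\sum_j p_j^{(i)}>0$ on $A$; also $P_{j_0}(A)>0$ already forces $\mu(A)>0$. Hence Theorem~\ref{dualOBMI1} applies to $(P_1^{(i)},\dots,P_m^{(i)})$ on $A$ and yields inequality~(\ref{obmi}) for the perturbed measures, for each $i$.

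Next I would let $i\to\infty$. Theorem~\ref{dualOrthm1}(iv) gives $\widetilde{+}_{\varphi}(p_1^{(i)},\dots,p_m^{(i)})\to \widetilde{+}_{\varphi}(p_1,\dots,p_m)$ pointwise, while the estimate (\ref{formulawithequal}) supplies the common integrable dominant $\tau_0^{-1}(\sum_j p_j+m q)$ valid for every $i\geq 1$. Dominated convergence then gives $\widetilde{+}_{\varphi}(P_1^{(i)},\dots,P_m^{(i)})(A)\to \widetilde{+}_{\varphi}(P_1,\dots,P_m)(A)$, and $P_j^{(i)}(A)\to P_j(A)$ is immediate.

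The hard part will be checking that the limiting denominator is strictly positive, so that continuity of $\varphi$ can transport the inequality across the limit; this is where the hypothesis $\varphi(e_j)=1$ becomes essential. Combining Theorem~\ref{dualOrthm1}(iii) with (ii) yields $\widetilde{+}_{\varphi}(p_1,\dots,p_m)\geq \widetilde{+}_{\varphi}(0,\dots,0,p_{j_0},0,\dots,0)=p_{j_0}$ pointwise, hence $\widetilde{+}_{\varphi}(P_1,\dots,P_m)(A)\geq P_{j_0}(A)>0$. Continuity of $\varphi$ then passes inequality~(\ref{obmi}) for the perturbed measures to the limit, giving (\ref{obmi-cor}); the convex case is identical with the reversed inequality.
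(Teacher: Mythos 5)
Your proof is correct and follows essentially the same route as the paper: perturb the densities by $\ee q$ (with $q$ a positive density if $\cF^+\neq\emptyset$, or $q=\mathbf{1}_A$ if $\mu(A)<\infty$), apply Theorem~\ref{dualOBMI1} to the perturbed measures, pass to the limit via Theorem~\ref{dualOrthm1}(iv) and dominated convergence, and use Theorem~\ref{dualOrthm1}(ii)--(iii) together with the normalization $\varphi(e_j)=1$ to keep the denominator bounded below by $P_{j_0}(A)>0$. The only cosmetic differences are the choice of perturbation parameter ($1/i$ versus $\ee$) and that you make the integrable dominant $\tau_0^{-1}(\sum_j p_j+mq)$ explicit.
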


\begin{proof}  Assume that $\mu(A)<\infty$ and $\varphi\in \Phi_m$ such that $\varphi(e_j)=1$ for $j=1, \cdots, m$. Let $p_j\in \cM$ be density functions of $P_j\in \cF$ for $j=1,\cdots,m$. Let $\ee>0$ and $p_j^{\ee}$ be functions defined on $A$ by 
	\begin{equation}\label{obmi-corc}
	p_j^{\ee}(x)=p_j(x)+\ee, \ \ \ \ \mathrm{for} \ \ x\in A.
	\end{equation}
	It is clear that  $p_j^{\ee}\downarrow p_j$
pointwisely on $A$ as $\ee \downarrow 0$. By the arguments of (iii) and  (iv) in Theorem \ref{dualOrthm1},  we get  $$\widetilde{+}_{\varphi}(p_1^{\ee},\cdots,
p_m^{\ee})\downarrow\widetilde{+}_{\varphi}(p_1,\cdots, p_m)$$
pointwisely on $A$  as $\ee \downarrow 0$. The Lebesgue dominated convergence theorem (as $\mu(A)<\infty$) implies that, as $\varepsilon\downarrow 0$,
$$\int_A p_j^{\ee} (x) \,d\mu(x)
\downarrow \int_A p_j (x) \,d\mu(x)$$  for $j=1,\cdots,m$ and
$$
\int_A \widetilde{+}_{\varphi}(p_1^{\ee},\cdots\!,
p_m^{\ee})(x)\,d\mu(x)
\downarrow \int_A \widetilde{+}_{\varphi}(p_1,\cdots\!, p_m)(x)\,d\mu(x).
$$

The statements (ii) and (iii) in Theorem \ref{dualOrthm1}, together with the assumption that $P_{j_0}(A)>0$ for  some $j_0\leq m$, imply
\begin{eqnarray*} \widetilde{+}_{\varphi}(P_1,\cdots\!, P_m)(A) \!\!\!\!&=& \!\!\! \! \int_A \widetilde{+}_{\varphi}(p_1,\cdots, p_m)(x)\,d\mu(x)\\ \!\!\!  &
\geq&\!\!\! \!\!
\int_A \widetilde{+}_{\varphi}(0,\cdots\!,0, p_{j_0}, 0,\cdots\!,0)(x)\,d\mu(x)\\ \! \!\!\! &=&\!\!\! P_{j_0}(A). \end{eqnarray*}
Then,  for all $\ee>0$,  $$\int_A \widetilde{+}_{\varphi}(p_1^{\ee},\cdots,
p_m^{\ee})(x)\,d\mu(x)\geq P_{j_0}(A)> 0.$$  Assume that $\varphi\in \Phi_m$ is concave. By inequality (\ref{obmi}), one has,
\begin{eqnarray}1\!\!&\leq&\!\!
\varphi\left(\!\frac{\int_A p_1^{\ee} (x) \,d\mu(x)}
{\int_A \widetilde{+}_{\varphi}(p_1^{\ee}, \cdots,
p_m^{\ee})(x)\,d\mu(x)},\cdots  \right. \nonumber \\ &&\ \ \ \ \ \ \ \left. \cdots, \frac{\int_A p_m^{\ee} (x) \,d\mu(x)}
{\int_A \widetilde{+}_{\varphi}(p_1^{\ee},\cdots,
p_m^{\ee})(x)\,d\mu(x)}\! \right). \nonumber
\end{eqnarray}  Letting $\ee\downarrow 0$ and by the continuity of $\varphi$, one gets the desired inequality (\ref{obmi-cor}).

 The proof for the other case  $\cF^+\neq \emptyset$ follows along the same lines with $p_j^{\ee}$ in \eqref{obmi-corc} replaced by $$p_j^{\ee}=p_j+\ee p$$  where $p$ is the density function of  any given measure $P\in \cF^+$.  \end{proof}

\subsection{Special cases and applications.}
The above functional Orlicz-Brunn-Minkowski inequalities for the Orlicz addition of measures are important and have many interesting consequences. We will list some of them in both geometry and analysis.

The first one is the following fundamental dual Orlicz-Brunn-Minkowski inequality for star bodies  \cite{ghwy14}. See \cite{zzx14} for a spacial case. For  $\varphi\in \Phi_m\cup \Psi_m$,  let $$\varphi_0(z)=\varphi(z_1^{1/n},\dots, z_m^{1/n})$$ for $z=(z_1, \cdots, z_m)\in [0,\infty)^m$ if $\varphi\in \Phi_m$ and for  $z\in (0,\infty)^m$ if $\varphi\in \Psi_m$.  Let $V_n(K)$ stand for the $n$-dimensional volume of $K$. When $K$ is a star body,
 $$V_n(K)=\frac{1}{n} \int_{S^{n-1}}\rho_K^n(u)\,d\sigma(u),$$ with $\sigma$ the spherical measure on $S^{n-1}$.

\begin{thm}\label{dualOBMI1-geometric}
Let $m, n\ge 2$.  If $\varphi\in \Phi_m\cup \Psi_m$ such that $\varphi_0$ is concave, then for all star bodies $K_1, \cdots, K_m$,
\begin{equation*}
\varphi_0\left(\frac{V_n(K_1)}{V_n(\widetilde{+}_{\varphi}(K_1,\dots,
K_m))},\dots,
\frac{V_n(K_m)}{V_n(\widetilde{+}_{\varphi}(K_1,\dots,
K_m))}\right)\ge 1,
\end{equation*}
while if $\varphi_0$ is convex, the inequality holds with $\geq$ replaced by $\leq$.

If $\varphi_0$ is strictly concave (or convex, as appropriate), equality holds if and only if there exist constants $a_j>0$ such that $\rho_{K_j}=a_j \rho_{K_1}$ for $j=2,\dots,m$.
\end{thm}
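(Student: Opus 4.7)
The plan is to derive Theorem \ref{dualOBMI1-geometric} as a direct application of the measure-theoretic inequality Theorem \ref{dualOBMI1}. The key observation is that volumes of star bodies can be packaged as total masses of measures on $S^{n-1}$ whose densities are the $n$-th powers of radial functions; under this packaging the radial Orlicz sum of star bodies corresponds (with $\varphi$ replaced by $\varphi_0$) to the Orlicz addition of those densities in the sense of Definition \ref{orliczadditionmeasre-1}.

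Concretely, I would first set $\Omega=S^{n-1}$, $\mu=\sigma$ the spherical measure, and for each star body $K_j$ define $p_j=\rho_{K_j}^n$. Since each $K_j$ is a star body, $\rho_{K_j}$ is continuous and positive on $S^{n-1}$, so $p_j\in\cM^{+c}$ and the associated measure $P_j$ lies in $\cF^{+c}$ with
\[
P_j(S^{n-1})=\int_{S^{n-1}}\rho_{K_j}^n(u)\,d\sigma(u)=n\,V_n(K_j).
\]
Next I would verify that $\varphi_0\in\Phi_m$ (resp.\ $\Psi_m$) whenever $\varphi\in\Phi_m$ (resp.\ $\Psi_m$), which is immediate from the monotonicity, continuity and limit conditions defining these classes.

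The central step is the identity
\[
\widetilde{+}_{\varphi_0}(p_1,\dots,p_m)(u)=\rho_{\widetilde{+}_{\varphi}(K_1,\dots,K_m)}^n(u),\qquad u\in S^{n-1}.
\]
To see this, insert $p_j=\rho_{K_j}^n$ into the defining equation (\ref{Orldef-function}) for $\widetilde{+}_{\varphi_0}$; using $\varphi_0(z)=\varphi(z_1^{1/n},\dots,z_m^{1/n})$ this becomes exactly the defining equation (\ref{Orldef}) for the radial Orlicz sum, after identifying $\widetilde{+}_{\varphi_0}(p_1,\dots,p_m)(u)^{1/n}$ with $\rho_{\widetilde{+}_{\varphi}(K_1,\dots,K_m)}(u)$; uniqueness of the solution gives the identity. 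Integrating over $S^{n-1}$ then yields
\[
\widetilde{+}_{\varphi_0}(P_1,\dots,P_m)(S^{n-1})=n\,V_n\bigl(\widetilde{+}_{\varphi}(K_1,\dots,K_m)\bigr).
\]

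With these translations in hand, I would apply Theorem \ref{dualOBMI1} to the measures $P_1,\dots,P_m\in\cF^{+c}$ with the concave (resp.\ convex) function $\varphi_0\in\Phi_m\cup\Psi_m$ and $A=S^{n-1}$; the positivity hypothesis $\sum_j p_j>0$ holds everywhere on $S^{n-1}$. The factor $n$ in numerator and denominator cancels, and the desired inequality (with $\ge$ or $\le$ according to the concavity or convexity of $\varphi_0$) follows at once. For the equality characterization, strict concavity or convexity of $\varphi_0$ lets us invoke the equality case of Theorem \ref{dualOBMI1}, which gives $\rho_{K_j}^n=a_j\rho_{K_1}^n$ on $S^{n-1}$ for constants $a_j>0$, so $\rho_{K_j}=a_j^{1/n}\rho_{K_1}$ and relabeling the constants produces the stated conclusion. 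The main (and essentially only) obstacle is establishing the identity $\widetilde{+}_{\varphi_0}(p_1,\dots,p_m)=\rho_{\widetilde{+}_{\varphi}(K_1,\dots,K_m)}^n$; once this bridge between the two Orlicz additions is verified, the rest is bookkeeping.
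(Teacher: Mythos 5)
Your proposal is correct and follows essentially the same route as the paper: set $\Omega=S^{n-1}$, $\mu=\sigma$, $p_j=\rho_{K_j}^n$, establish the bridge identity $\widetilde{+}_{\varphi_0}(\rho_{K_1}^n,\dots,\rho_{K_m}^n)=\rho_{\widetilde{+}_{\varphi}(K_1,\dots,K_m)}^n$ from the uniqueness of the implicit definitions, and then invoke Theorem \ref{dualOBMI1}. The paper states this in a single sentence; your write-up simply fills in the verification that the exponent $n$ slides correctly through $\varphi_0$ and that $\varphi_0$ inherits membership in $\Phi_m$ or $\Psi_m$ from $\varphi$.
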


In fact, Theorem \ref{dualOBMI1-geometric} follows from Theorem \ref{dualOBMI1} directly by letting $\Omega=S^{n-1}$, $\mu=\sigma$ the spherical measure on $S^{n-1}$, $p_j=\rho_{K_j}^n$,  and by the fact that  $$\big[\rho_{\widetilde{+}_{\varphi}(K_1,\dots,K_m)}\big]^n= \widetilde{+}_{\varphi_0}(\rho_{K_1}^n,\cdots, \rho_{K_m}^n).$$

Let $A\subset \Omega$ be a measurable subset with $\mu(A)\neq 0$. Define  $$\|p\|^s_{s, A}= \int_{A} p(x)^s\,d\mu(x),$$  for $p\in\cM$ if $s>0$ and for $p\in \cM^+$ if $s<0$.   Denote by $\mathcal{L}_{s, A}$ the set of functions with finite $\|\cdot\|^s_{s, A}$, that is, if $p\in \mathcal{L}_{s, A}$, then $\|p\|^s_{s, A}<\infty$.  Let   $\varphi_s(z)=\varphi(z_1^{1/s}, \cdots, z_m^{1/s}).$ 
 We have the following theorem regarding $\|\cdot\|_{s, A}$. 
\begin{thm}\label{min-general-2016}  Let $m\ge 2$ and let $p_j\in \cM^+$ with $0<\|p_j\|_{s, A}<\infty$.    Let $\varphi\in \Phi_m\cup \Psi_m $ such that $\varphi_s$ is concave. Then
\begin{equation*}
\varphi \left(\!\frac{\|p_1\|_{s,A}}
{\|\widetilde{+}_{\varphi}(p_1,\cdots\!, p_m)\|_{s, A}},\cdots\!,\frac{\|p_m\|_{s,A}}
{\|\widetilde{+}_{\varphi}(p_1,\cdots\!, p_m)\|_{s, A}}\! \right)\ge 1.
\end{equation*} If $\varphi_s$ is convex, the inequality holds with $\ge$ replaced by $\le$.

If $\varphi_s$ is strictly concave or convex,
and $p_1, \cdots, p_m \in \cM^{+c}$,
equality holds if and only if there are constants $a_j>0$ such that  $p_j(x)=a_j p_1(x)$ for all $x\in A$ and for $2\leq j\leq m$. \end{thm}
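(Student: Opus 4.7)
The plan is to deduce Theorem \ref{min-general-2016} from Theorem \ref{dualOBMI1} via the change of variables $p_j \mapsto p_j^s$ and $\varphi \mapsto \varphi_s$. The first key step is the pointwise identity
$$\bigl[\widetilde{+}_{\varphi}(p_1,\ldots,p_m)(x)\bigr]^{s} = \widetilde{+}_{\varphi_s}(p_1^{s},\ldots,p_m^{s})(x),$$
which follows directly from the implicit Definition \ref{orliczadditionmeasre-1}: writing $\lambda$ for the right-hand side, the substitution $z_j = p_j^s/\lambda$ into $\varphi_s(z)=\varphi(z_1^{1/s},\ldots,z_m^{1/s})$ recovers the defining equation for $\widetilde{+}_\varphi$ at $x$, and the uniqueness of that solution finishes the identification.

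Next, I would verify the routine bookkeeping that $\varphi_s$ itself lies in $\Phi_m \cup \Psi_m$. If $s>0$, composition with the increasing map $z\mapsto z^{1/s}$ preserves the class of $\varphi$; if $s<0$, a $\Phi_m$ function becomes a $\Psi_m$ function on $(0,\infty)^m$, and vice versa, with the required boundary limits at $0$ and $\infty$ exchanging accordingly. Concavity (respectively convexity) of $\varphi_s$ is assumed outright by hypothesis.

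Now I would apply Theorem \ref{dualOBMI1} to the finite measures $Q_j$ on $A$ defined by $dQ_j = p_j^s\, d\mu$, with Orlicz function $\varphi_s$. These measures are finite since $\|p_j\|_{s,A}^s<\infty$; the hypotheses are met because $p_j\in\cM^+$ forces $p_j^s>0$ on $A$, and $\|p_j\|_{s,A}>0$ forces $\mu(A)>0$. The conclusion of Theorem \ref{dualOBMI1} reads
$$\varphi_s\!\left(\frac{Q_1(A)}{\widetilde{+}_{\varphi_s}(Q_1,\ldots,Q_m)(A)},\ldots, \frac{Q_m(A)}{\widetilde{+}_{\varphi_s}(Q_1,\ldots,Q_m)(A)}\right)\ge 1,$$
and it suffices to translate back: $Q_j(A)=\|p_j\|_{s,A}^s$, while the pointwise identity gives $\widetilde{+}_{\varphi_s}(Q_1,\ldots,Q_m)(A) = \|\widetilde{+}_\varphi(p_1,\ldots,p_m)\|_{s,A}^s$. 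Unwrapping the definition of $\varphi_s$ yields exactly the stated inequality with $\varphi$ and the $\|\cdot\|_{s,A}$ norms. The convex case follows by the same chain of implications with reversed inequality.

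For the equality characterization, I would invoke the equality clause of Theorem \ref{dualOBMI1}: if $p_j\in\cM^{+c}$ then $p_j^s\in\cM^{+c}$ so $Q_j\in\cF^{+c}$, and strict concavity or convexity of $\varphi_s$ forces $p_j^s = b_j p_1^s$ pointwise on $A$ for constants $b_j>0$; taking $s$-th roots gives $p_j = a_j p_1$ with $a_j = b_j^{1/s}>0$. The only real obstacle is the class-transfer verification $\varphi \mapsto \varphi_s \in \Phi_m \cup \Psi_m$ across the sign cases of $s$; once that piece of routine bookkeeping is in place, the theorem is an immediate corollary of Theorem \ref{dualOBMI1}.
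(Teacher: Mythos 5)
Your proof is correct and follows essentially the same route as the paper: both reduce to Theorem \ref{dualOBMI1} via the pointwise identity $[\widetilde{+}_{\varphi}(p_1,\ldots,p_m)]^s=\widetilde{+}_{\varphi_s}(p_1^s,\ldots,p_m^s)$, which the paper records as the chain $1=\varphi(\ldots)=\varphi_s(\ldots)$ and then invokes Theorem \ref{dualOBMI1} for $\varphi_s$ and densities $p_j^s$. You merely spell out the class-transfer bookkeeping (that $\varphi_s\in\Phi_m\cup\Psi_m$, with the expected swap when $s<0$) that the paper's very brief proof leaves implicit.
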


\begin{proof} The desired result follows from Theorem \ref{dualOBMI1} and the following equality: 
\begin{eqnarray*} 1\!\!\!\!\!&=&\!\!\!\!\!
\varphi\!\left(\! \frac{p_1(x)}{\widetilde{+}_{\varphi}
(p_1,\cdots\!,p_m)(x)},\cdots\!, \frac{p_m(x)}
{\widetilde{+}_{\varphi}(p_1,\cdots\!,p_m)(x)}\!\right) \\ 
\!\!\!\!\!&=&\!\!\!\!\! \varphi_s\!\left(\! \frac{[p_1(x)]^s}{[\widetilde{+}_{\varphi}
(p_1,\cdots\!,p_m)(x)]^s},\cdots\!, \frac{[p_m(x)]^s}
{[\widetilde{+}_{\varphi}(p_1,\cdots\!,p_m)(x)]^s}\!\right).
\end{eqnarray*} 
 \end{proof} 
A special case of Theorem \ref{min-general-2016} is the standard Minkowski inequality for the $L_s$ norm of functions with $s\geq 1$. Here, the $L_s$ norm of $g$ is $$\|g\|_{L^s(A)}=\||g|\|_{s, A}.$$ 
In fact, let $m=2$ and $\varphi(x_1, x_2)=x_1 +x_2$, then
 \begin{eqnarray*}
\|g_1+g_2\|_{L^s(A)}&\leq& \||g_1|+|g_2|\|_{s, A}\\
&\leq&
\||g_1|\|_{s, A}+\||g_2|\|_{s, A}\\
&=&\|g_1\|_{L^s(A)}+\|g_2\|_{L^s(A)},
\end{eqnarray*} where the second inequality follows from Theorem \ref{min-general-2016}.

A fundamental object in convex geometry is the $L_s$ mixed volume.  Define $V_s(K, L)$, the $L_s$ mixed volume of  convex bodies $K, L$ with the origin in their interior, by \begin{equation}\label{Ls-mixed-volume-2016} V_s(K, L)=\frac{1}{n} \int_{S^{n-1}} h_L^s(u) h_K^{1-s} (u)\,dS_K(u),\end{equation}  where $S_K$ on $S^{n-1}$ is the surface area measure of $K$. Note that  $V_s(K, K)=V_n(K)$, the volume of $K$. Let $\Omega=S^{n-1}$ and $n\cdot d\mu=h_K^{1-s}\, dS_K$, then $$V_s(K, L)=\|h_L\|^s_{s, S^{n-1}}.$$ Together with Theorem \ref{min-general-2016} and formula (\ref{orlicz-addition-convex--11----111}), one gets the following Orlicz-Brunn-Minkowski type inequality for the $L_s$ mixed volumes, which is new to the literature of geometry.

\begin{thm}\label{min-general-2016-geometry}  Let $m\ge 2$ and let $K, K_1, \cdots, K_m$ be convex bodies with the origin in their interiors.  Let  $\varphi\in \Phi_m\cup \Psi_m $ such that $\varphi_s$ is concave. Then
\begin{eqnarray*} 
\varphi_s \!\left(\!\!\frac{V_s(K, K_1)}
{V_s(K, {+}_{\varphi}(\!K_1,\!\cdots\!, K_m)\!)},\cdots\!, \frac{V_s(K, K_m)}
{V_s(K, {+}_{\varphi}(\!K_1,\!\cdots\!, K_m)\!)}\! \!\right)\! \ge \!1.
\end{eqnarray*} If $\varphi_s$ is convex, the inequality holds with $\ge$ replaced by $\le$.

If $\varphi_s$ is strictly concave or convex, equality holds if and only if there are constants $a_j>0$ such that  $h_{K_j}=a_j h_{K_1}$ for all $2\leq j\leq m$. \end{thm}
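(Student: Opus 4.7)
The plan is to reduce this geometric statement to the already-established functional inequality in Theorem \ref{min-general-2016} by an appropriate choice of measure on $S^{n-1}$. Specifically, I would set $\Omega = S^{n-1}$ and define a reference measure $\mu$ via $n\,d\mu = h_K^{1-s}\,dS_K$, where $S_K$ is the surface area measure of $K$. Since $K$ has the origin in its interior, $h_K$ is positive and bounded on $S^{n-1}$, and $S_K$ is a finite Borel measure, so $\mu$ is a well-defined finite measure with $\mu(S^{n-1}) > 0$.

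Next I would take $p_j = h_{K_j}$ for $j = 1, \dots, m$. Because each $K_j$ has the origin in its interior, each $h_{K_j}$ is continuous and strictly positive on $S^{n-1}$, hence $p_j \in \mathcal{M}^{+c}$, and $0 < \|p_j\|_{s, S^{n-1}}^s < \infty$. By the defining formula \eqref{Ls-mixed-volume-2016} and the choice of $\mu$,
\[
\|p_j\|_{s, S^{n-1}}^s = \int_{S^{n-1}} h_{K_j}^s\,d\mu = V_s(K, K_j).
\]
Moreover, the sentence following \eqref{orlicz-addition-convex--11----111} identifies $h_{+_\varphi(K_1, \dots, K_m)}$ on $S^{n-1}$ as precisely the functional Orlicz sum $\widetilde{+}_\varphi(h_{K_1}, \dots, h_{K_m})$, since both satisfy the same implicit equation which has a unique positive solution. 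Therefore
\[
\|\widetilde{+}_\varphi(p_1, \dots, p_m)\|_{s, S^{n-1}}^s = V_s(K, {+}_\varphi(K_1, \dots, K_m)).
\]

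With these identifications, both the concave and convex cases of the desired inequality follow immediately by substituting into Theorem \ref{min-general-2016} with $A = S^{n-1}$. For equality, I would invoke the equality condition of Theorem \ref{min-general-2016}: when $\varphi_s$ is strictly concave (or convex) and the $p_j \in \mathcal{M}^{+c}$, equality forces $p_j = a_j p_1$ on $S^{n-1}$, i.e.\ $h_{K_j} = a_j h_{K_1}$ for $2 \leq j \leq m$, which is exactly the stated equality characterization.

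There is no real obstacle here beyond bookkeeping; the entire content of the theorem is packaged into choosing the correct density/measure pair on $S^{n-1}$. The only subtle point worth flagging is the identification $h_{+_\varphi(K_1, \dots, K_m)} = \widetilde{+}_\varphi(h_{K_1}, \dots, h_{K_m})$, which rests on the uniqueness of the solution to the implicit equation \eqref{Orldef-function} (or equivalently \eqref{orlicz-addition-convex--11----111}). Once this is noted, the functional affine inequality transfers into the mixed-volume setting with no further work.
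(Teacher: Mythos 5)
Your proposal is correct and follows exactly the route the paper uses: it sets $\Omega = S^{n-1}$, $n\,d\mu = h_K^{1-s}\,dS_K$, $p_j = h_{K_j}$, identifies $h_{+_\varphi(K_1,\dots,K_m)}$ with $\widetilde{+}_\varphi(h_{K_1},\dots,h_{K_m})$ via the uniqueness of the solution of \eqref{orlicz-addition-convex--11----111}, and then cites Theorem \ref{min-general-2016}. The paper states this translation (including the equality characterization) in the paragraph immediately preceding the theorem, so there is no divergence in method.
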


 \section{An interpretation of the $f$-divergence}

 A special case of the Orlicz addition of functions $p_1, \cdots, p_m$ in Definition \ref{orliczadditionmeasre-1} is the linear Orlicz addition, where $\varphi$ in formula (\ref{Orldef-function}) is replaced by \begin{equation*}
\varphi(x_1,\cdots,x_m)=\sum_{j=1}^m\alpha_j\varphi_j(x_j),
\end{equation*} with all  $\alpha_j> 0$, and with $\varphi_j$ either all in $\Phi_1$ or all in $\Psi_1$.  To obtain an interpretation for the $f$-divergence, we consider $m=2$,
$\alpha_1=1$, and $\alpha_2=\ee>0$. That is, let $\varphi(x_1, x_2)=\varphi_1(x_1)+\epsilon \varphi_2(x_2)$   for $\varphi_1,\varphi_2\in \Phi_1$ and let $p_1\widetilde{+}_{\varphi,\ee} p_2$ be given by
\begin{equation}\label{rlc-function}
\varphi_1\left(\frac{p_1(x)}{p_1\widetilde{+}_{\varphi,\ee} p_2(x)}\right)+\ee\varphi_2
\left(\frac{p_2(x)}{p_1\widetilde{+}_{\varphi,\ee} p_2(x)}\right)=1,
\end{equation} if $p_1(x)+p_2(x)>0$, and otherwise by 0.  We use the same formula for $p_1\widetilde{+}_{\varphi,\ee} p_2$ if $\varphi_1,\varphi_2\in \Psi_1$ and
$p_1, p_2\in \cM^+$.

\subsection{An interpretation of the $f$-divergence}
The following lemma is needed for our interpretation of the $f$-divergence. Denote by $(\varphi_1)'_l(1)$ and $(\varphi_1)'_r(1)$  the left and, respectively, the right derivatives of $\varphi_1$ at $t=1$  if they exist. Let $\Phi_1^{(1)}$ and $\Psi_1^{(1)}$ stand for the set of functions $\varphi\in \Phi_1$ and, respectively, $\varphi\in \Psi_1$, such that $\varphi(1)=1$.

\begin{thm}\label{maindov}
Let $\varphi_1, \varphi_2 \in {\Phi}_1^{(1)}$  such that $(\varphi_1)'_l(1)$ exists and is positive.  Let  $A\subset \Omega$ be measurable with $\mu(A)\neq 0$, and $p_1\in \cM^+\cap \mathcal{L}_{s, A}$ and $p_2\in \cM\cap  \mathcal{L}_{s, A}$   such that, $$\sup_{x\in A} \left(\frac{p_2(x)}{p_1(x)}\right) <a_1$$ for some constant $a_1<\infty$. Then, for  $0\neq s\in \R$, one has,
\begin{eqnarray}\label{ndov-s-norm}
&&(\varphi_1)'_l(1)\lim_{\ee\rightarrow 0^+}
\frac{\|p_1\widetilde{+}_{\varphi,\ee}p_2\|^s_{s, A}
-\|p_1\|^s_{s, A}}{s\cdot \ee}\nonumber\\
&&\ \  \ \ \ \ \ \ \ \ \ =
\int_{A}
\varphi_2\left(\frac{p_2(x)}{ p_1(x)}\right) \big[p_1(x)\big]^s\,d\mu(x).
\end{eqnarray}

If $\varphi_1, \varphi_2\in {\Psi}_1^{(1)}$ satisfy that  $(\varphi_1)'_r(1)$ exists and is nonzero, and  if $p_1, p_2\in \cM^+\cap  \mathcal{L}_{s, A}$ such that $$\inf_{x\in A}\bigg(\frac{p_2(x)}{p_1(x)}\bigg)>a_2$$ for some constant $a_2>0$,  then \eqref{ndov-s-norm} holds with $(\varphi_1)'_l(1)$ replaced by $(\varphi_1)'_r(1)$.
\end{thm}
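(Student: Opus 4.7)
The plan is to write $u_\ee(x) := (p_1\widetilde{+}_{\varphi,\ee}p_2)(x) = p_1(x)\,t_\ee(x)$, reduce everything to controlling the scalar quantity $t_\ee(x)$, and then conclude by dominated convergence. For $\varphi_1,\varphi_2\in\Phi_1^{(1)}$, setting $r(x)=p_2(x)/p_1(x)\in[0,a_1)$ and dividing the defining identity \eqref{rlc-function} by $u_\ee(x)$'s argument, the equation becomes $\varphi_1(1/t_\ee)+\ee\,\varphi_2(r/t_\ee)=1$. Since $\varphi_1(1)=1$ and $\ee\varphi_2(r/t_\ee)\ge 0$, strict monotonicity of $\varphi_1$ forces $1/t_\ee\le 1$, i.e.\ $t_\ee\ge 1$, so $1/t_\ee\uparrow 1$ as $\ee\to 0^+$ and the \emph{left} derivative of $\varphi_1$ at $1$ is the relevant quantity.

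The first main step is to produce bounds on $t_\ee$ that are uniform in $x\in A$. Using $r\le a_1$ and monotonicity of $\varphi_2$, one has $\varphi_1(1/t_\ee)\ge 1-\ee\,\varphi_2(a_1)$, and since $\varphi_1^{-1}$ is continuous at $1$, this gives $\tau^\ast\le 1/t_\ee(x)\le 1$ for all $x\in A$ and all sufficiently small $\ee>0$, with $\tau^\ast\in(0,1)$ chosen as close to $1$ as we wish. In particular $1\le t_\ee(x)\le C:=1/\tau^\ast$ uniformly. Next, rewrite the defining identity as
\begin{equation*}
\frac{\varphi_1(1)-\varphi_1(1/t_\ee(x))}{1-1/t_\ee(x)}\cdot\frac{1-1/t_\ee(x)}{\ee}=\varphi_2\!\left(\frac{r(x)}{t_\ee(x)}\right).
\end{equation*}
Since $(\varphi_1)'_l(1)>0$ exists, the left difference quotient stays $\ge (\varphi_1)'_l(1)/2$ once $\tau^\ast$ is taken close enough to $1$; combined with $\varphi_2(r/t_\ee)\le \varphi_2(a_1)$ this yields a uniform bound $(t_\ee-1)/\ee\le M$ on $A$. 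Pointwise, the same display together with continuity of $\varphi_2$ and $t_\ee\to 1$ gives the limit
\begin{equation*}
\lim_{\ee\to 0^+}\frac{t_\ee(x)-1}{\ee}=\frac{\varphi_2(r(x))}{(\varphi_1)'_l(1)}.
\end{equation*}

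The second step is the passage to the $s$-norm. Writing $u_\ee^s-p_1^s=p_1^s(t_\ee^s-1)$ and applying the mean value theorem on $[1,C]$ one gets $|t_\ee^s-1|\le |s|\max(1,C^{s-1})\,|t_\ee-1|$, so $|(u_\ee^s-p_1^s)/(s\ee)|\le M'\,p_1^s$ pointwise on $A$, where $M'$ depends only on $s,C,(\varphi_1)'_l(1),\varphi_2(a_1)$. Since $p_1\in\mathcal L_{s,A}$, the function $M'p_1^s$ is integrable; meanwhile the pointwise limit is $p_1^s\,\varphi_2(p_2/p_1)/(\varphi_1)'_l(1)$. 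The dominated convergence theorem then delivers \eqref{ndov-s-norm}, after multiplying by $(\varphi_1)'_l(1)$.

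The $\Psi_1^{(1)}$ case is entirely parallel: the only structural change is that $\varphi_1,\varphi_2$ are strictly decreasing, so $\varphi_1(1/t_\ee)\le 1=\varphi_1(1)$ now forces $1/t_\ee\ge 1$, i.e.\ $t_\ee\le 1$, and one approaches $1$ from the right, whence the right derivative $(\varphi_1)'_r(1)$ appears. The hypothesis $\inf_A(p_2/p_1)>a_2>0$ takes over the role of the upper bound on $r$: it gives $\varphi_2(r/t_\ee)\le \varphi_2(a_2)<\infty$ (since $\varphi_2$ is decreasing and $r/t_\ee\ge r\ge a_2$), and a similar argument as above produces a uniform lower bound $t_\ee(x)\ge c>0$. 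The remainder of the argument is identical. The main obstacle, in both cases, is producing the uniform bounds on $t_\ee$ and on $(t_\ee-1)/\ee$ that are independent of $x\in A$ so that dominated convergence applies with the integrable majorant $p_1^s$; the structural hypotheses on $r$ are precisely what make this possible.
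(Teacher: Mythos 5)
Your proof is correct and follows essentially the same route as the paper's. You write the quotient $t_\ee = (p_1\widetilde{+}_{\varphi,\ee}p_2)/p_1$ where the paper uses $w(\ee,x)=t_\ee-1$; you establish the same monotone comparison $1\le t_\ee\le C$, derive the same pointwise limit $(t_\ee-1)/\ee\to\varphi_2(p_2/p_1)/(\varphi_1)'_l(1)$ from the implicit defining equation, obtain an $\ee$- and $x$-uniform bound on $(t_\ee-1)/\ee$ from the same estimates, and finish with dominated convergence using $p_1^s$ as the integrable majorant; the only cosmetic divergence is that you invoke the mean value theorem to pass from $|t_\ee-1|$ to $|t_\ee^s-1|$, whereas the paper compares $\frac{(1+w)^s-1}{s\ee}$ against the explicit dominating quotient $\frac{(1+u(\ee))^s-1}{s\ee}$ and its limit.
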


\begin{proof}    Let $\varphi_1, \varphi_2 \in {\Phi}_1^{(1)}$ and $\ee\in (0,1]$.  As $\varphi_1$ and $\varphi_2$ are strictly increasing, one can easily check,  by an argument similar to the proof of Theorem \ref{dualOrthm1} (iii), that  for all $\ee\in (0, 1]$,
$$p_1(x) \leq p_1\widetilde{+}_{\varphi,\ee} p_2(x)
\leq p_1\widetilde{+}_{\varphi,1} p_2 (x),\ \  \mathrm{for\ all}\ x\in A. $$
This together with formula  (\ref{rlc-function}) yield, for all $x\in A$,
\begin{eqnarray*} 1&=& \varphi_1\bigg(\frac{p_1(x)}{p_1\widetilde{+}_{\varphi,\ee} p_2(x)}\bigg)+\ee\varphi_2
\bigg(\frac{p_2(x)}{p_1\widetilde{+}_{\varphi,\ee} p_2(x)}\bigg)\\  &\leq& \varphi_1\bigg(\frac{p_1(x)}{p_1\widetilde{+}_{\varphi,\ee} p_2(x)}\bigg)+\ee\varphi_2
\bigg(\frac{p_2(x)}{p_1(x)}\bigg)\\ &\leq&  \varphi_1\bigg(\frac{p_1(x)}{p_1\widetilde{+}_{\varphi,\ee} p_2(x)}\bigg)+\ee\varphi_2
(a_1),\end{eqnarray*} where we have used the assumption
$$ \sup_{x\in A}\bigg(\frac{p_2(x)}{p_1(x)}\bigg)<a_1<\infty.
$$ The above assumption also implies that  there is a constant $b_1<\infty$, s.t., for all $\ee\in (0, 1]$,  $$1 \leq \frac{p_1\widetilde{+}_{\varphi,\ee} p_2}{p_1}
\leq \frac{p_1\widetilde{+}_{\varphi,1} p_2}{p_1}<b_1\ \ \ \mathrm{on} \ A. $$

Let $\ee$ be small enough so that  $1-\ee \varphi_2(a_1)>0$. Then,
\begin{eqnarray*} \varphi_1^{-1}\big(1-\ee\varphi_2
(a_1) \big)\leq \frac{p_1(x)}{p_1\widetilde{+}_{\varphi,\ee} p_2(x)} ,\end{eqnarray*}
 and hence, for all $x\in A$,
\begin{eqnarray}\label{bound1}
0&\le& \frac{
p_1\widetilde{+}_{\varphi,\ee} p_2(x)- p_1(x)}{p_1(x)}\nonumber\\ &=& \bigg( \frac{
p_1\widetilde{+}_{\varphi,\ee} p_2(x)}{p_1(x) }\bigg)\cdot \bigg(1- \frac{p_1(x)}{p_1\widetilde{+}_{\varphi,\ee} p_2(x)}\bigg) \nonumber\\
&\le& b_1\cdot \left(1-\varphi_1^{-1}\left(1-\ee\varphi_2(a_1)\right)\right).
\end{eqnarray} 
Taking $\ee\rightarrow 0^+$, (\ref{bound1}) yields   \begin{equation}\label{uniformconvergent-0} \frac{p_1\widetilde{+}_{\varphi,\ee} p_2}{p_1}\rightarrow   1,  \ \mathrm{uniformly\  on}\  A \ \mathrm{as} \ \ee\rightarrow 0^+.  \end{equation}

For convenience,   let $$w(\ee, x)=\frac{p_1\widetilde{+}_{\varphi,\ee}p_2(x)-p_1(x)}{p_1(x)}.$$ Then $w(\ee, x)\rightarrow 0^+$ as $\ee \rightarrow 0^+$ by (\ref{uniformconvergent-0}).  For $x\in A$,  by  (\ref{rlc-function}) and (\ref{uniformconvergent-0}), \begin{eqnarray}  \lim_{\ee\rightarrow 0^+}\!\!\! \frac{w(\ee, x)}{\ee } \! \!\!\!&=&\!\!\!\! \lim_{\ee\rightarrow 0^+}\! \!\bigg(\!\frac{p_1\widetilde{+}_{\varphi,\ee} p_2(x)}{p_1(x)}\!\bigg)\!\cdot\! \lim_{\ee\rightarrow 0^+}\!\!  \frac{\left(\!1\!-\!\frac{p_1(x)}{p_1\widetilde{+}_{\varphi,\ee} p_2(x)}\!\right)}{\ee}  \nonumber  \\ \!\!\!\!&= &\!\!\!\!\! \lim_{ \ee\rightarrow 0^+} \!\!\left(\!\frac{1\!-\!z(\ee)}{1\!-\!\varphi_1(z(\ee))} \!\!\right)\!\cdot\! \lim_{\ee\rightarrow 0^+} \!  \varphi_2\!
\left(\!\frac{p_2(x)}{p_1\widetilde{+}_{\varphi,\ee} p_2(x)}\!\!\right)   \nonumber \\ \! \!\!\!&=&\!\!\!\! \frac{1}{(\varphi_1)'_l(1)}\cdot  \varphi_2\left(\!\frac{p_2(x)}{ p_1(x)}\!\right) \label{limit-1-1-1}
\end{eqnarray} where we have used  $$z(\ee)=\varphi_1^{-1} \left(1-\ee\varphi_2
\left(\frac{p_2(x)}{p_1\widetilde{+}_{\varphi,\ee} p_2(x)}\right)\right)\rightarrow 1^{-} $$  as $\ee\rightarrow 0^+$ (note that $\varphi_1\in \Phi_1^{(1)}$ is increasing). This further implies that for all $x\in A$,
 \begin{eqnarray} 0 &\leq& \lim_{\ee\rightarrow 0^+}
\frac{\big[p_1\widetilde{+}_{\varphi,\ee}p_2(x)\big]^s
-\big[ p_1(x)\big]^s}{s\cdot \ee}\nonumber\\  &=& \lim_{\ee\rightarrow 0^+}
\frac{\big[1+ w(\ee, x)\big]^s
-1}{s\cdot \ee}\cdot \big[p_1(x)\big]^s \nonumber \\ &=&   \lim_{\ee\rightarrow 0^+}
\frac{\big[1+ w(\ee, x)\big]^s
-1}{s\cdot w(\ee, x)}\cdot \lim_{\ee\rightarrow 0^+}  \frac{w(\ee, x)}{ \ee}\cdot \big[p_1(x)\big]^s\nonumber\\ &=& \frac{1}{(\varphi_1)'_l(1)}\cdot \varphi_2\left(\frac{p_2(x)}{ p_1(x)}\!\right)  \cdot \big[p_1(x)\big]^s. \label{unform-bound-4-1} \end{eqnarray}

Moreover, by inequality (\ref{bound1}) and a calculation similar to (\ref{unform-bound-4-1}), we get, for $\ee<1/\varphi_2(a_1)$, for $0\neq s\in \R$ and for all $x\in A$, \begin{eqnarray*} 0&\leq&
\lim_{\ee\rightarrow 0^+}
\frac{\big[p_1\widetilde{+}_{\varphi,\ee}p_2(x)\big]^s
-\big[ p_1(x)\big]^s}{s\cdot \ee}\nonumber\\  &\leq&  \lim_{\ee\rightarrow 0^+}
\frac{\big[1+ u(\ee)\big]^s
-1}{s\cdot \ee}\cdot \big[p_1(x)\big]^s \\  &=&\frac{b_1\cdot  \varphi_2\left(a_1\right)}{(\varphi_1)'_l(1)} \cdot \big[p_1(x)\big]^s, \end{eqnarray*} where $u(\ee)=b_1\cdot \left(1-\varphi_1^{-1}\left(1-\ee\varphi_2(a_1)\right)\right) $ and
  \begin{eqnarray*} \lim_{\ee\rightarrow 0^+} \!\! \frac{u(\ee)}{\ee}= \frac{b_1\cdot  \varphi_2\left(a_1\right)}{(\varphi_1)'_l(1)}\end{eqnarray*} follows from a calculation similar to (\ref{limit-1-1-1}).  Hence, for $0\neq s\in \R$, one can find $\ee_0<1/\varphi_2(a_1)$, such that, for all $0<\ee<\ee_0$ and for all $x\in A$,
   \begin{eqnarray*}
\frac{\big[p_1\widetilde{+}_{\varphi,\ee}p_2(x)\big]^s
-\big[ p_1(x)\big]^s}{s\cdot \ee}  \leq  \frac{2b_1\cdot  \varphi_2\left(a_1\right)}{(\varphi_1)'_l(1)} \cdot \big[p_1(x)\big]^s. 
\end{eqnarray*}

Note that $p_1\in \cM^+\cap \mathcal{L}_{s, A}$, hence  $$0\leq \int_A  \frac{2b_1\cdot  \varphi_2\left(a_1\right)}{(\varphi_1)'_l(1)} \cdot \big[p_1(x)\big]^s\,d\mu(x)<\infty.$$ The desired formula (\ref{ndov-s-norm}) then follows by the Lebesgue dominant convergent theorem. That is,    \begin{eqnarray*} &&(\varphi_1)'_l(1)\lim_{\ee\rightarrow 0^+}
\frac{\|p_1\widetilde{+}_{\varphi,\ee}p_2\|^s_{s, A}
-\|p_1\|^s_{s, A}}{s\cdot \ee}\nonumber\\  && \  \  = (\varphi_1)'_l(1) \int_A \lim_{\ee\rightarrow 0^+}
\frac{\big[1+ w(\ee, x)\big]^s
-1}{s\cdot \ee}\cdot \big[p_1(x)\big]^s\,d\mu(x) \\
&&\ \  =
\int_{A}
\varphi_2\left(\frac{p_2(x)}{ p_1(x)}\right) \big[p_1(x)\big]^s\,d\mu(x).
\end{eqnarray*}

The case for $\varphi_1, \varphi_2 \in {\Psi}_1^{(1)}$ can be proved along the same lines.  For completeness, we include a brief proof with modification emphasized. Assume that  $$\inf_{x\in A}\bigg(\frac{p_2(x)}{p_1(x)}\bigg)>a_2>0.
$$ If  $\varphi_1, \varphi_2 \in {\Psi}_1^{(1)}$ and $\ee\in (0,1]$, then for $x\in A$,  $$  \frac{p_1\widetilde{+}_{\varphi, 1}p_2(x)}{p_1(x)} \leq \frac{p_1\widetilde{+}_{\varphi,\ee}p_2(x)}{p_1(x)} \leq 1,$$ where $p_1, p_2\in \cM^+$. Note that  $\varphi_1$ and $\varphi_2$ are decreasing. Hence, formula (\ref{rlc-function}) yields, for all $x\in A$,
\begin{eqnarray*} 1 \leq   \varphi_1\bigg(\frac{p_1(x)}{p_1\widetilde{+}_{\varphi,\ee} p_2(x)}\bigg)+\ee\varphi_2
(a_2).\end{eqnarray*}
Let $\ee<1/ \varphi_2(a_2).$ Similar to inequality (\ref{bound1}), one has,  \begin{eqnarray*}
0  &\leq&  \frac{ p_1(x)-
p_1\widetilde{+}_{\varphi,\ee} p_2(x)}{p_1(x)} \\ &=&   \bigg(\frac{p_1\widetilde{+}_{\varphi,\ee} p_2(x)}{p_1(x)}\bigg) \cdot   \bigg(\frac{p_1(x)} {p_1\widetilde{+}_{\varphi,\ee} p_2(x)}-1\bigg) \\ &\leq&  \varphi_1^{-1}\left(1-\ee\varphi_2(a_2)\right)-1.
\end{eqnarray*} This yields  (\ref{uniformconvergent-0}) if we let $\ee\rightarrow 0^+$. Moreover,
\begin{eqnarray} \lim_{\ee\rightarrow 0^+}\! \frac{\varphi_1^{-1}\!\left(1-\ee\varphi_2(a_2)\right)-1}{\ee} \!\!\!\!&=&\!\!\!\! -\! \lim_{\ee\rightarrow 0^+} \! \frac{\bar{z}(\ee)-1}{\varphi_1(\bar{z}(\ee))-1} \cdot \varphi_2(a_2) \nonumber \\ \!\!\!\!&=&\!\!\!\! -\frac{\varphi_2(a_2)}{(\varphi_1)'_r(1)},\label{limit-constant-1} \end{eqnarray} because $\bar{z}(\ee)=\varphi_1^{-1}\left(1-\ee\varphi_2(a_2)\right)\rightarrow 1^+$ as $\ee\rightarrow 0^+$ (note that $\varphi_1$ is decreasing). Hence,  one can find $\ee_0<1/\varphi_2(a_2)$, such that for all $0<\ee<\ee_0$ and for all $x\in A$,
   \begin{eqnarray*}  \frac{ \big[p_1(x)\big]^s-
\big[p_1\widetilde{+}_{\varphi,\ee} p_2(x) \big]^s}{p_1(x)\cdot \ee} \leq - \frac{2  \varphi_2\left(a_1\right)}{(\varphi_1)'_r(1)}\cdot \big[p_s(x)\big]^s.
\end{eqnarray*}  Following the calculations for (\ref{unform-bound-4-1}) and (\ref{limit-constant-1}), one can get,  for all $x\in A$,  \begin{eqnarray*}0&\leq&   \lim_{\ee\rightarrow 0^+}
\frac{\big[ p_1(x)\big]^s-\big[p_1\widetilde{+}_{\varphi,\ee}p_2(x)\big]^s
}{s\cdot \ee}\\ &=& - \frac{1}{(\varphi_1)'_r(1)}\cdot \varphi_2\left(\frac{p_2(x)}{ p_1(x)}\!\right) \cdot \big[p_1(x)\big]^s.   \end{eqnarray*} The desired formula (\ref{ndov-s-norm}) then follows by the Lebesgue dominant convergent theorem.
\end{proof}

Let $p_1$ and $p_2$ be density functions of measures $P_1\in\cF^+ $ and $P_2\in \cF$  respectively. Consider $A=\Omega$ and $s=1$. Under the assumptions stated in Theorem \ref{maindov},  formula (\ref{ndov-s-norm}) becomes, if one notices the definition of the $f$-divergence given in  (\ref{2016-1-20-f-def}),
 \begin{eqnarray}
\lim_{\ee\rightarrow 0^+}
\frac{P_1\widetilde{+}_{\varphi,\ee}P_2 (\Omega)
\!-\! P_1(\Omega)}{\ee} = \frac{1}{ (\varphi_1)'_l(1)}\cdot D_{\varphi_2} (P_2, P_1), \label{f-div-rep--1} \end{eqnarray} where the measure $P_1\widetilde{+}_{\varphi,\ee}P_2$ refers to the measure with the density function $p_1\widetilde{+}_{\varphi,\ee} p_2$. In other words,  we provide an interpretation for the  $f$-divergence by the linear Orlicz addition of measures.

\subsection{The Orlicz mixed volume and its dual}
 Again, with suitable selections of $\Omega, \mu,  P_1, P_2$ etc, one can obtain many interesting and important results. 
 
 For a continuous function $\phi: [0, \infty)\rightarrow \R$, define $\widetilde{V}_{\phi}(K, L)$, the dual Orlicz mixed volume of star bodies $K$ and $L$,  by \begin{eqnarray*}
\widetilde{V}_{\phi}(K, L)= \frac{1}{n} \int_{S^{n-1}}
\phi\left(\frac{\rho_L(u)}{\rho_K(u)}\right)[\rho_K(u)]^n\,d\sigma(u). \end{eqnarray*}  The dual Orlicz mixed volume is a central concept in the dual Orlicz-Brunn-Minkowski theory. It can be obtained by formula (\ref{ndov-s-norm}), if we let  $\Omega=S^{n-1}$, $n\cdot \mu=\sigma$ the spherical measure on $S^{n-1}$, $s=n$,  $p_1=\rho_{K}$,  $p_2=\rho_{L}$ and the star body $K\widetilde{+}_{\varphi,\ee}L$ determined by, for $u\in S^{n-1}$, \begin{equation*}
\varphi_1\bigg(\frac{\rho_K(u)}{\rho_{K\widetilde{+}_{\varphi,\ee} L}(u)}\bigg)+\ee\varphi_2
\bigg(\frac{\rho_L(u)}{\rho_{K\widetilde{+}_{\varphi,\ee} L}(u)}\bigg)=1.
\end{equation*} That is, \begin{equation*}
(\varphi_1)'_l(1)\lim_{\ee\rightarrow 0^+}\frac{V_n(K\widetilde{+}_{\varphi,\ee}L)-V_n(K)}{n\cdot \ee}=\widetilde{V}_{\varphi_2}(K, L).
\end{equation*} Please see Theorem 5.4 in \cite{ghwy14} for more precise statements.

Now we prove the following theorem regarding the $L_s$ mixed volume given by (\ref{Ls-mixed-volume-2016}). Let $K, L$ be convex bodies with the origin in their interiors. Let $\Omega=S^{n-1}$ and $n\cdot \, d\mu=h_K^{1-s}\, dS_K$.  Define the convex body $K+_{\varphi, \ee}L$ by its support function  $h_{K+_{\varphi,\ee} L}$,  the unique solution of  \begin{equation*}
\varphi_1\bigg(\frac{h_K(u)}{h_{K+_{\varphi,\ee} L}(u)}\bigg)+\ee\varphi_2
\bigg(\frac{h_L(u)}{h_{K+_{\varphi,\ee} L}(u)}\bigg)=1,
\end{equation*} for $u\in S^{n-1}$ and  for convex functions $\varphi_1, \varphi_2\in \Phi_1^{(1)}$.

\begin{cor} Let $K, L$ be convex bodies with the origin in their interiors. Assume that convex functions $\varphi_1, \varphi_2\in \Phi_1^{(1)}$ satisfy the conditions in  Theorem \ref{maindov}. Then, for $0\neq s\in \R$, \begin{eqnarray*} &&(\varphi_1)'_l(1)\lim_{\ee\rightarrow 0^+}
\frac{V_s(K, K+_{\varphi,\ee}L)
-V_n(K)}{s\cdot \ee}= V_{\varphi_2}(K, L), \end{eqnarray*} where $V_{\phi}(K, L)$ is the Orlicz $\phi$-mixed volume (\cite{ghw14,xjl14,y15})  defined by   \begin{eqnarray*}V_{\varphi_2}(K, L) =\frac{1}{n}
\int_{S^{n-1}}
\varphi_2\left(\frac{h_L(u)}{ h_K(u)}\right)  h_K(u) \, dS_K(u).
\end{eqnarray*}\end{cor}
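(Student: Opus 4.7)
The plan is to obtain this corollary as a direct specialization of Theorem \ref{maindov}, after making the correct identifications between the functional-analytic setup there and the geometric $L_s$ mixed-volume setup here. The main thing to verify is that all the hypotheses of Theorem \ref{maindov} are met and that both sides of the asserted formula translate correctly into $V_s(K,\cdot)$ and $V_{\varphi_2}(K,L)$.

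First, I would set $\Omega = S^{n-1}$, $A = S^{n-1}$, and choose the measure $n\cdot d\mu = h_K^{1-s}\,dS_K$, together with $p_1 = h_K$ and $p_2 = h_L$. Because $K$ and $L$ are convex bodies with the origin in their interiors, $h_K$ and $h_L$ are continuous and strictly positive on the compact set $S^{n-1}$; hence they are bounded above and below by positive constants, so $p_1 \in \cM^+ \cap \mathcal{L}_{s,A}$, $p_2 \in \cM \cap \mathcal{L}_{s,A}$, and the quotient $h_L/h_K$ is bounded above (and below) on $S^{n-1}$, giving the required $\sup_{x\in A}(p_2/p_1)<a_1$. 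Thus all hypotheses of Theorem \ref{maindov} apply.

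Next, I would identify the Orlicz-addition pieces. The defining equation for $h_{K+_{\varphi,\ee}L}$ is exactly formula (\ref{rlc-function}) with $p_1=h_K$ and $p_2=h_L$, so $h_{K+_{\varphi,\ee}L} = h_K\,\widetilde{+}_{\varphi,\ee}\,h_L$ as a function on $S^{n-1}$. Then, by definition of $\|\cdot\|_{s,A}$ and our choice of $\mu$,
\begin{equation*}
\|h_{K+_{\varphi,\ee}L}\|_{s,S^{n-1}}^{s}
= \frac{1}{n}\int_{S^{n-1}} h_{K+_{\varphi,\ee}L}^{\,s}\, h_K^{1-s}\,dS_K
= V_s(K, K+_{\varphi,\ee}L),
\end{equation*}
and similarly $\|h_K\|_{s,S^{n-1}}^{s} = \frac{1}{n}\int_{S^{n-1}} h_K\,dS_K = V_n(K)$. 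On the right-hand side of (\ref{ndov-s-norm}),
\begin{equation*}
\int_{A}\varphi_2\!\left(\frac{h_L(u)}{h_K(u)}\right)\,h_K(u)^s\,d\mu(u)
= \frac{1}{n}\int_{S^{n-1}} \varphi_2\!\left(\frac{h_L(u)}{h_K(u)}\right) h_K(u)\,dS_K(u)
= V_{\varphi_2}(K,L),
\end{equation*}
since the $h_K^{\,s}\cdot h_K^{1-s}$ factors combine to $h_K$.

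Plugging these two identifications into the conclusion (\ref{ndov-s-norm}) of Theorem \ref{maindov} yields the claimed formula. The hardest part is essentially bookkeeping --- making sure the factor $1/n$ in $d\mu$ versus $dS_K$ lines up correctly, and confirming that convexity of $\varphi_1,\varphi_2\in\Phi_1^{(1)}$ (which is needed to make $K+_{\varphi,\ee}L$ a genuine convex body via equation (\ref{orlicz-addition-convex--11----111})) is compatible with the differentiability hypothesis on $(\varphi_1)'_l(1)$ required by Theorem \ref{maindov}; once these are checked there is no new analytic content to produce.
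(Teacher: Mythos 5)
Your proposal follows exactly the same route as the paper: specialize Theorem \ref{maindov} with $\Omega=A=S^{n-1}$, $n\,d\mu=h_K^{1-s}\,dS_K$, $p_1=h_K$, $p_2=h_L$, then translate $\|\cdot\|_{s,S^{n-1}}^s$ into $V_s(K,\cdot)$. You merely spell out the bookkeeping that the paper leaves implicit; the substance is identical.
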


\begin{proof} Let  $\Omega=S^{n-1}$ and $n\cdot \, d\mu=h_K^{1-s}\, dS_K$.   Let $p_1 =h_K$ and $p_2=h_L$.  Note that if $K, L$ are convex bodies, then $p_1$ and $p_2$ satisfy the assumptions in Theorem \ref{maindov} automatically.  The corollary follows immediately from Theorem \ref{maindov} and the fact  $V_s(K, L)=\|h_L(u)\|^s_{s, S^{n-1}}.$      \end{proof} In other words, we provide a new  interpretation for the Orlicz $\phi$-mixed volume, which is different from the one given by  \cite{ghw14,xjl14}: \begin{eqnarray*} &&(\varphi_1)'_l(1)\lim_{\ee\rightarrow 0^+}
\frac{V_n(K+_{\varphi,\ee}L)
-V_n(K)}{n \cdot \ee}=V_{\varphi_2}(K, L).\end{eqnarray*} It is worth to mention that the Orlicz $\phi$-mixed volume is a fundamental object in the Orlicz-Brunn-Minkowski theory for convex bodies; and it plays important roles in, e.g., the Orlicz-Minkowski inequality \cite{ghw14,xjl14}, and the Orlicz affine and geominimal surface areas \cite{y15}.

 \section{An inequality equivalent to Jensen's inequality}

With the linear Orlicz addition of functions, we can prove that the classical Jensen's inequality has an equivalent form. For $\alpha_1, \alpha_2> 0$, let  \begin{equation}\label{phrest}
\varphi(x_1, x_2)= \alpha_1\varphi_1(x_1)+ \alpha_2\varphi_2(x_2),
\end{equation} with $\varphi_1, \varphi_2$ are either both in $\Phi_1$ or both in $\Psi_1$. For this special $\varphi$, the dual functional Orlicz-Brunn-Minkowski inequality in Theorem \ref{dualOBMI1} can be rewritten as:  \begin{equation} \label{dualOBMI22-22} \alpha_1 \varphi_1\!\bigg(\!\frac{P_1(\Omega)}
{\widetilde{+}_{\varphi}(P_1, P_2)(\Omega)}\!\bigg)\!+\alpha_2\varphi_2\!\bigg(\!\frac{P_2(\Omega)}
{\widetilde{+}_{\varphi}(P_1,  P_2)(\Omega)} \! \bigg)\ge 1
\end{equation}    if $\varphi_1, \varphi_2$ are concave; and the direction of the inequality is reversed if $\varphi_1, \varphi_2$ are convex.  On the other hand, by Jensen's inequality, one can obtain the following inequality:  \begin{eqnarray}D_{\phi}(P_2, P_1)&=& \int_{\Omega}  \phi\bigg( \frac{p_2(x)}{p_1(x)} \bigg)p_1(x)\,d\mu(x) \nonumber \\ & \leq & P_1(\Omega) \cdot  \phi \left( \frac{P_2(\Omega)}{P_1(\Omega)}\right), \label{Iso:type:22-22} \end{eqnarray} if $\phi$ is concave; the direction of the inequality is reversed if $\phi$ is  convex. 
  If $\phi$ is strictly concave or convex and $p_1, p_2\in \cM^{+c}$, equality holds if and only if $p_2/p_1$ is a constant on $\Omega$. Note that H\"{o}lder's and Jensen's inequalities are special cases of  inequality (\ref{Iso:type:22-22}).

 \begin{thm}\label{crdm} Let $p_1, p_2, \varphi_1, \varphi_2$ satisfy the conditions in Theorem \ref{maindov}.  The dual functional Orlicz-Brunn-Minkowski inequality (\ref{dualOBMI22-22}) is equivalent to inequality (\ref{Iso:type:22-22}) in the following sense: if one of them holds, the other one also holds.

 Moreover, if the convexity or concavity of functions involved is strict and $p_1, p_2\in \cM^{+c}$, these two inequalities have the same characterization for equality.
  \end{thm}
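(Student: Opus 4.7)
The plan is to establish the two implications separately. For \eqref{dualOBMI22-22} $\Rightarrow$ \eqref{Iso:type:22-22}, specialize the linear Orlicz addition with $\alpha_1=1$, $\alpha_2=\ee$ and extract \eqref{Iso:type:22-22} as a first-order asymptotic as $\ee \to 0^+$, relying crucially on Theorem \ref{maindov} (in particular \eqref{f-div-rep--1}) to identify the derivative of the total mass as the $f$-divergence. For \eqref{Iso:type:22-22} $\Rightarrow$ \eqref{dualOBMI22-22}, integrate the defining relation \eqref{Orldef-function} against the density of the Orlicz sum to obtain an exact identity expressing $\widetilde{+}_\varphi(P_1,P_2)(\Omega)$ as a linear combination of two $f$-divergences, then bound each of them by \eqref{Iso:type:22-22}.

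For the forward direction (concave case), setting $\alpha_1=1$ and $\alpha_2=\ee$ in \eqref{dualOBMI22-22} and using $\varphi_1(1)=1$ rewrites the inequality as
\begin{equation*}
\frac{\varphi_1\!\bigl(P_1(\Omega)/R_\ee\bigr)-\varphi_1(1)}{\ee}+\varphi_2\!\bigl(P_2(\Omega)/R_\ee\bigr)\ge 0,
\end{equation*}
where $R_\ee:=(P_1\widetilde{+}_{\varphi,\ee}P_2)(\Omega)$. By \eqref{f-div-rep--1}, $R_\ee\to P_1(\Omega)$ with $(R_\ee-P_1(\Omega))/\ee\to D_{\varphi_2}(P_2,P_1)/(\varphi_1)'_l(1)$. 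A chain-rule computation, using that $P_1(\Omega)/R_\ee \to 1^-$ and that $(\varphi_1)'_l(1)$ exists, gives
\begin{equation*}
\lim_{\ee\to 0^+}\frac{\varphi_1(P_1(\Omega)/R_\ee)-1}{\ee}=-\frac{D_{\varphi_2}(P_2,P_1)}{P_1(\Omega)},
\end{equation*}
while the second term tends to $\varphi_2(P_2(\Omega)/P_1(\Omega))$; rearrangement yields \eqref{Iso:type:22-22}. The convex case and the $\Psi_1^{(1)}$ case are handled identically with the obvious sign changes.

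For the reverse direction, write $q:=\widetilde{+}_\varphi(p_1,p_2)$ and $Q:=\widetilde{+}_\varphi(P_1,P_2)$. Multiplying the defining relation $\alpha_1\varphi_1(p_1/q)+\alpha_2\varphi_2(p_2/q)=1$ by $q$ and integrating against $d\mu$ over $\Omega$ produces the exact identity
\begin{equation*}
\alpha_1 D_{\varphi_1}(P_1,Q)+\alpha_2 D_{\varphi_2}(P_2,Q)=Q(\Omega).
\end{equation*}
Applying \eqref{Iso:type:22-22} (with direction matching the concavity/convexity of $\varphi_1$ and $\varphi_2$) to each summand and dividing through by $Q(\Omega)$ yields \eqref{dualOBMI22-22}.

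For the equality characterization under strict convexity or concavity with $p_1,p_2\in\cM^{+c}$: the reverse direction forces equality in each of the two applications of \eqref{Iso:type:22-22}, which occurs iff $p_1/q$ and $p_2/q$ are constant on $\Omega$, equivalently $p_2/p_1$ is constant; conversely, if $p_2=cp_1$, uniqueness of the solution of \eqref{Orldef-function} makes $q$ a constant multiple of $p_1$ so equality holds in \eqref{dualOBMI22-22}, and the same proportionality makes \eqref{Iso:type:22-22} an equality. Thus both inequalities are equalities iff $p_2/p_1$ is constant on $\Omega$. The main technical obstacle lies in the forward direction: pushing the Orlicz-Brunn-Minkowski inequality to its first-order limit requires the regularity assumptions of Theorem \ref{maindov} (positivity of $(\varphi_1)'_l(1)$ and the uniform bound $\sup_{\Omega}(p_2/p_1)<\infty$) to justify the chain rule and the exchange of limit and integration used implicitly through \eqref{f-div-rep--1}.
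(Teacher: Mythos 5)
Your proof is correct and follows essentially the same strategy as the paper: for (\ref{Iso:type:22-22}) $\Rightarrow$ (\ref{dualOBMI22-22}) you integrate the linear Orlicz relation against the density of the Orlicz sum to get the identity $\alpha_1 D_{\varphi_1}(P_1,Q)+\alpha_2 D_{\varphi_2}(P_2,Q)=Q(\Omega)$ and then bound each divergence, exactly as the paper does; for the converse you specialize $\alpha_1=1,\alpha_2=\ee$ and pass to the first-order limit via (\ref{f-div-rep--1}), which is the paper's route, differing only in the cosmetic detail of computing $\lim_{\ee\to 0^+}\bigl(\varphi_1(P_1(\Omega)/R_\ee)-1\bigr)/\ee$ directly by the chain rule rather than isolating $P_1(\Omega)/R_\ee\ge\varphi_1^{-1}(\cdot)$ first.
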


\begin{proof} We only prove the case when  $\varphi_1, \varphi_2\in \Phi_1^{(1)}$  are concave. The proofs for other cases can be proved along the same lines.

Let $\varphi$ be as in (\ref{phrest}) for some constants $\alpha_1, \alpha_2>0$.  First, recall that $p_1\in \cM^+\cap \mathcal{L}_{s, \Omega}$. Statements (ii)-(iii) of Theorem \ref{dualOrthm1} yield $$0<P_1\widetilde{+}_{\varphi} P_2(\Omega) <\infty, $$ where $P_1\widetilde{+}_{\varphi} P_2$ is the measure with density function $p_1\widetilde{+}_{\varphi} p_2$ given by, for $x\in \Omega$, 
\begin{equation} \label{liner-11}
\alpha_1\varphi_1\left(\frac{p_1(x)}{p_1\widetilde{+}_{\varphi} p_2(x)}\right)+\alpha_2\varphi_2
\left(\frac{p_2(x)}{p_1\widetilde{+}_{\varphi} p_2(x)}\right)=1. \end{equation}

Suppose that inequality (\ref{Iso:type:22-22}) holds true. For the concave functions $\varphi_1, \varphi_2$, 
\begin{eqnarray*}\frac{D_{\varphi_2}(P_2, P_1\widetilde{+}_{\varphi} P_2)} {P_1\widetilde{+}_{\varphi} P_2 (\Omega)}  &\leq&   \varphi_2 \left( \frac{P_2(\Omega)}{P_1\widetilde{+}_{\varphi} P_2 (\Omega)}\right), \\ \frac{D_{\varphi_1}(P_1, P_1\widetilde{+}_{\varphi} P_2)} {P_1\widetilde{+}_{\varphi} P_2 (\Omega)}  &\leq&   \varphi_1 \left( \frac{P_1(\Omega)}{P_1\widetilde{+}_{\varphi} P_2 (\Omega)}\right). \end{eqnarray*}
It can be checked by (\ref{liner-11}) that
\begin{eqnarray*}1
\!\!\!&=&\!\!\!\alpha_1 \frac{D_{\varphi_1}(P_1, P_1\widetilde{+}_{\varphi} P_2)} {P_1\widetilde{+}_{\varphi} P_2 (\Omega)} +\alpha_2\frac{D_{\varphi_2}(P_2, P_1\widetilde{+}_{\varphi} P_2)} {P_1\widetilde{+}_{\varphi} P_2 (\Omega)}  \\ \!\!\!&\leq & \!\!\!
 \alpha_1 \varphi_1\!\bigg(\!\frac{P_1(\Omega)}
{\widetilde{+}_{\varphi}(P_1, P_2)(\Omega)}\!\bigg)\!+\alpha_2\varphi_2\!\bigg(\!\frac{P_2(\Omega)}
{\widetilde{+}_{\varphi}(P_1,  P_2)(\Omega)} \! \bigg).
\end{eqnarray*} That is the desired inequality (\ref{dualOBMI22-22}) holds.

On the other hand, assume that inequality (\ref{dualOBMI22-22}) holds for all $\alpha_1, \alpha_2>0$, in particular for $\alpha_1=1$ and $\alpha_2=\ee$. Then,
\begin{eqnarray*}
  \varphi_1\left(\!\frac{P_1(\Omega)}
{P_1\widetilde{+}_{\varphi,\ee}P_2(\Omega)}\!\right)
+\varepsilon \varphi_2\left(\!\frac{P_2(\Omega)}
{P_1\widetilde{+}_{\varphi,\ee}P_2(\Omega)}\!\right)
\geq 1
\end{eqnarray*}  which is equivalent to, for $\ee$ small enough,
\begin{eqnarray*}
 \!\frac{P_1(\Omega)}
{P_1\widetilde{+}_{\varphi,\ee}P_2(\Omega)} \geq   \varphi_1^{-1} \left(1-\varepsilon \varphi_2\left(\!\frac{P_2(\Omega)}
{P_1\widetilde{+}_{\varphi,\ee}P_2(\Omega)}\!\right)\right). \end{eqnarray*}  Together with (\ref{f-div-rep--1}), one gets,
 \begin{eqnarray*} \frac{D_{\varphi_2} (P_2, P_1)}{P_1(\Omega)}\!\!\!&=& \!\!\! \!(\varphi_1)'_l(1)
\lim_{\ee\rightarrow 0^+}
\frac{P_1\widetilde{+}_{\varphi,\ee}P_2 (\Omega)
\!-\! P_1(\Omega)}{\ee \cdot P_1(\Omega)} \\  \!\!\!&=&\!\!\! \! (\varphi_1)'_l(1)
\lim_{\ee\rightarrow 0^+}
\frac{1-\big(\frac{P_1(\Omega)}{P_1\widetilde{+}_{\varphi,\ee}P_2 (\Omega)} \big) }{\ee } \\ \!\!\!&\leq& \!\!\!\! (\varphi_1)'_l(1)\!
\lim_{\ee\rightarrow 0^+}\!\!\!
\frac{1\!-\!\varphi_1^{-1}\!\! \left(\!1\!-\!\varepsilon \varphi_2\!\!\left(\!\frac{P_2(\Omega)}
{P_1\widetilde{+}_{\varphi,\ee}P_2(\Omega)}\!\right)\!\right)}{\ee} \\ \!\!\! &=&\!\!\!\! \varphi_2\bigg(\frac{P_2(\Omega)}{P_1(\Omega)}\bigg)  \end{eqnarray*}  where the limit in the last equality can be obtained by a calculation similar to (\ref{limit-1-1-1}). Hence, inequality (\ref{Iso:type:22-22}) holds.

Note that  if the functions involved are strict concave and $p_1, p_2\in \cM^{+c}$, these two inequalities have the same characterization for equality; that is, there is a constant $\alpha>0$ such that $p_1=\alpha p_2$ on $\Omega$.  \end{proof}

\section{An optimization problem  for the $f$-divergence and related affine isoperimetric inequalities}\label{doaff}

A general optimization problem for the Csisz\'{a}r's  $f$-divergence can be described as follows: for a fixed measure $P_1\!\in\! \cF$ and a set of measures $\E\!\subset\! \cF$,  find  \begin{equation}\label{optimization-1=1=1} \inf_{P_2\in\E}D_f(P_2,P_1)\ \ \ \ \mathrm{or} \ \ \ \ \sup_{P_2\in\E}D_f(P_2,P_1),\end{equation} where the infimum and supremum depend on the convexity and concavity of $f$. The optimization problem (\ref{optimization-1=1=1}) contains many important objects in the information theory as special cases, such as the famous $I$-divergence geometry of probability distributions (see e.g., the highly cited paper by Csisz\'{a}r  \cite{c75}).  
 
 In this section, we link the optimization problem (\ref{optimization-1=1=1}) with Orlicz affine and geominimal surface areas in geometry. Then, we propose a special optimization problem and establish related functional affine isoperimetric inequalities. 
\subsection{Connection between the optimization problem (\ref{optimization-1=1=1}) and Orlicz affine and geominimal surface areas} \label{optimal-geominimal}

 With appropriate selections of geometric measures on convex or star bodies,  the optimization problem (\ref{optimization-1=1=1}) leads to fundamental geometric notions, for instance, the dual Orlicz affine and geominimal surface areas \cite{y14a}. Let $\phi: (0, \infty)\rightarrow (0, \infty)$ such that $\phi_n(t)=\phi(t^{1/n})$ for all $t\in (0, \infty)$ is decreasing and strictly convex. The dual Orlicz geominimal surface area of a star body $K$ is defined by \begin{eqnarray*}
 \widetilde{G}_{\phi}^{orlicz} (K)=\inf _{L\in \mathcal{K}} \left\{n\widetilde{V}_{\phi}(K, L ) \right\}  \end{eqnarray*}  where $\mathcal{K}$ is the set of convex bodies with the following properties: if $L\in \mathcal{K}$, then $L$ is a convex body with its centroid at $o$ and  with $V_n(L^\circ)=V_n(B^n_2)$. Here, $B^n_2$ is the unit Euclidean ball of $\mathbb R^n$ and $L^\circ$ is the polar body of $L$ defined by $$L^\circ=\{y\in \R^n: \langle x, y\rangle \leq 1  \ \  \  \mathrm{for\ all} \ x\in L\}.$$  Translating to the language of the $f$-divergence, one can let $\Omega=S^{n-1}$, $n \cdot \mu=\sigma$ the spherical measure on $S^{n-1}$,  $\,dP=\rho_K^n\,d\mu$ and $\,dQ=\rho_L^n\,d\mu$. Then, $$\widetilde{G}_{\phi}^{orlicz} (K)=\inf_{Q\in \E} D_{\phi_n} (Q, P), $$ where  $\E$ contains all measures $\,dQ=\rho_L^n\,d\mu$ with $L\in \mathcal{K}$.  

An arguably more important concept is the Orlicz geominimal surface area for convex bodies, which can be defined by, if $\phi(t^{-1/n})$ is strictly convex on $t\in (0, \infty)$,  $$ G_{\phi}^{orlicz} (K)=\inf_{L\in \mathcal{K}} \bigg(\int_{S^{n-1}}
\phi\left(\frac{h_L(u)}{ h_K(u)}\right)  h_K(u) \, dS_K(u)\bigg).$$ Translating to the language of the $f$-divergence, one can let $\Omega=S^{n-1}$, $n\cdot \mu=S_K$ the surface area measure of $K$ on $S^{n-1}$,  $\,dP=h_K\,d\mu$, $\,dQ=h_L\,d\mu$, and $\E$ be the set containing all measures $\,dQ=h_L\,d\mu$ with $L\in \mathcal{K}$.  Then,   $$G_{\phi}^{orlicz} (K)=\inf_{Q\in \E} D_{\phi} (Q, P).$$ 

\subsection{Functional affine isoperimetric inequalities}

Motivated by the connection  between the optimization problem (\ref{optimization-1=1=1}) and Orlicz affine and geominimal surface areas, we propose the dual functional affine and geominimal surface areas for functions and/or measures.  To simplify our arguments, we make the following assumptions (and more general results could be established by slight modifications). Let  $\Omega = \mathbb R^n$, $\mu$ be the Lebesgue measure on $\mathbb R^n$, and $\gamma_n$ be the Gaussian function. That is,  $\gamma_n(x)=e^{-\frac{\|x\|_2^2}{2}}$ for $x\in \mathbb R^n$ where $\|\cdot\|_2$ denotes the usual Euclidean norm on $\R^n$.  

For $p\in \cM^+$, define $p_{x_0}^\circ: \mathbb R^n\rightarrow [0, \infty]$, the polar dual function of $p$ with respect to $x_0\in \mathbb R^n$,  by $$ p_{x_0}^\circ(y) =\inf _{x\in \mathbb R^n} \bigg( \frac{e^{-\langle x, y\rangle}}{p(x-x_0)}\bigg).$$ In particular, the polar dual function of $p\in \cM^+$ (with respect to $o$) is $$p^\circ(y) =\inf _{x\in \mathbb R^n}\bigg(\frac{e^{-\langle x, y\rangle}}{p(x)}\bigg). $$ Note that $\gamma_n^\circ =\gamma_n$ and hence $\gamma_n$ can be viewed as the ``unit Euclidean ball" of functions (in terms of the polar dual for functions). Consequently,  the Gaussian function $\gamma_n$ serves as the optimizers of many optimization problems in, such as, probability theory and information theory.  

Let  $\cD\subset \cM^+$ be the set given by $$\cD=\left\{p\in \cM^+:  \mu(p) \mu(p^\circ) \leq \big[\mu(\gamma_n)\big]^2\right\}, $$  where for simplicity,  $$\mu(p)=\int_{\mathbb R^n} p(x)\,dx.$$  Clearly, $\cD\neq \emptyset$ as $\gamma_n\in \cD$. Note that the choice of the set $\cD$ is not ad-hoc; it comes from the geometry of log-concave functions. In fact,  the functional Blaschke-Santal\'o inequality for log-concave functions (see e.g., \cite{akm04, fm07, Lehec2009b})  states that for a log-concave function $p$ (where $p$ can be written as $p=e^{-\psi}$ with $\psi$ a convex function),   there exists $z_0\in\R^n$ (indeed $z_0$ can be assumed to be the center of mass of $p$) such that 
\begin{equation}\mu(p) \mu(p_{z_0}^\circ) \leq \big[\mu(\gamma_n)\big]^2=(2\pi)^n. \label{function--1-ss-1}\end{equation}  Denote by $\mathcal{L}_c$ the set of all log-concave functions; and clearly all log-concave functions with barycenters at $o$ are in $\cD.$   

Let  $\phi: (0, \infty)\rightarrow (0, \infty)$ be either in $\Phi$ or in $\Psi$ with   \begin{eqnarray*} \Phi\!\!\!&=&\!\!\!\{\phi: \phi \ \mbox{is decreasing and strictly convex  on} \ (0, \infty) \}; \\ \Psi \!\!\!&=&\!\!\!\{\phi: \phi \ \mbox{ is increasing and strictly concave on} \ (0, \infty) \}.\end{eqnarray*}  When we say a measure $Q\in \cD$, we mean that $Q$ is a measure whose density function $q$ is in $\cD$. 

Now, we define  the dual functional Orlicz affine and geominimal surface areas of functions and/or measures.  Write by $q$ the density function of $Q\in \cF$.  

  \begin{deff} \label{dOai} For fixed measure $P\in \cF^+$, the dual functional Orlicz affine surface area of $P$ is defined by \begin{eqnarray}\label{dOai1}
	\Orliczdual(P)=
	\inf_{	Q\in \cD}
	D_{\phi}\bigg(\frac{\mu(q^\circ)}{\mu(\gamma_n)} Q, P\bigg)
	\end{eqnarray}  for $\phi \in \Phi$; while for $\phi \in \Psi$, $\Orliczdual(P)$ is defined similarly but with ``$\inf$" replaced by ``$\sup$". 
	
In a similar way, with $\cD$ replaced by $\cD\cap \mathcal{L}_c$, we can define $\OrliczdualG(P)$,  the dual functional Orlicz geominimal surface area of $P$. 	
 \end{deff}
 
It can be easily checked that if $\phi$ is a constant $\alpha>0$, then  $\Orliczdual(P)=\OrliczdualG(P)=\alpha P(\mathbb R^n)$ for any fixed measure $P\in \cF^{+}$. It is also clear that $$\Orliczdual(P)\leq \OrliczdualG(P)$$ if $\phi\in \Phi$; while if $\phi\in \Psi$, $\Orliczdual(P)\geq \OrliczdualG(P)$.  

In general, it is not easy to calculate $\Orliczdual(P)$ and $\OrliczdualG(P)$, except when $P$ is a Gaussian measure.  To this end,  for $c>0$ a constant,  let $(\gamma_n\circ c) (x)=\gamma_n(cx)$ for all $x\in \mathbb R^n$.  Note that $(\gamma_n\circ c)^\circ=\gamma_n\circ c^{-1}$. By letting $q=\gamma_n\circ c$ which belongs to $\mathcal D$, one has,  \begin{eqnarray*} 
\Orliczdual(\gamma_n\circ c)\!\! &=&\!\! \inf_{Q\in \cD}
	D_{\phi}\bigg(\frac{\mu(q^\circ)}{\mu(\gamma_n)} Q, \gamma_n\circ c\bigg)\\ \!\!&\leq& \!\! \phi(c^n) \cdot \int_{\mathbb R^n} e^{-\frac{\|cx\|_2^2}{2}}\,dx. 
\end{eqnarray*}  On the other hand, as $\phi\in \Phi$ is convex, Jensen's inequality implies that \begin{eqnarray*}  \Orliczdual(\gamma_n\circ c) &\geq& \inf_{Q\in \cD} \mu (\gamma _n \circ c)  \cdot \phi\bigg(\frac{\mu(q) \mu(q^\circ) }{\mu(\gamma_n) \mu(\gamma_n\circ c)}\bigg)    \\ &\geq&  \inf_{Q\in \cD}  \phi(c^n) \cdot \int_{\mathbb R^n} e^{-\frac{\|cx\|_2^2}{2}}\,dx \\ &=&  \phi(c^n) \cdot \int_{\mathbb R^n} e^{-\frac{\|cx\|_2^2}{2}}\,dx, \end{eqnarray*} where the second inequality follows from the definition of $\mathcal{D}$ and the fact that $\phi \in \Phi$ is decreasing. That is, if $\phi\in \Phi$, then  \begin{eqnarray} 
\Orliczdual(\gamma_n\circ c) &=& \phi(c^n) \cdot \int_{\mathbb R^n} e^{-\frac{\|cx\|_2^2}{2}}\,dx\nonumber \\ &=&  \Big(\frac{\sqrt{2\pi}}{c}\Big)^n \cdot \phi(c^n). \label{equlity-gaussion} 
\end{eqnarray} This result also holds for  $\phi\in \Psi$. Moreover, if $\phi\in \Phi\cup \Psi$,   
\begin{eqnarray*} 
\OrliczdualG(\gamma_n\circ c) =\Big(\frac{\sqrt{2\pi}}{c}\Big)^n \cdot \phi(c^n). 
\end{eqnarray*} 

 Let $T$ be a linear transform on $\mathbb R^n$ with determinant $\pm 1$.  First of all, for all $p\in \cM^+$,  \begin{eqnarray*} (p\circ T)^\circ(y) &=& \inf _{x\in \mathbb R^n} \bigg(\frac{e^{-\langle x, y\rangle}}{(p\circ T)(x)}\bigg) \\ &=& \inf _{z\in \mathbb R^n} \bigg(\frac{e^{-\langle T^{-1} z, y\rangle}}{p(z)}\bigg)\\ &=&p^\circ (T^{-t}y),  \end{eqnarray*}  where $T^{-1}$ denotes the inverse of $T$ and $T^{-t}$ the transpose of $T^{-1}$. An easy argument by the substitution $z=Tx$ yields  $$\mu(p\circ T)=\int_{\mathbb R^n} (p\circ T)(x)\,dx =\int_{\mathbb R^n} p(z)\,dz. $$ Similarly,  $\mu\big((p\circ T)^\circ\big)=\mu(p^\circ)$ and hence $p\circ T\in \cD$ if $p\in \cD$. 

On the other hand, we can check that \begin{eqnarray*}  D_{\phi}(Q\circ T, P\circ T)\!\!\!&=&\!\!\! \int _{\mathbb R^n} \! \phi\left(\frac{(q\circ T)(x)}{(p\circ T)(x)}\right) (p\circ T)(x)\,dx\\ \!\!\! &=&  \!\!\!  \int _{\mathbb R^n} \! \phi\left(\frac{q(z)}{p(z)}\right) p(z)\,dz\\ &=&D_{\phi}(Q, P).  \end{eqnarray*}  Taking the infimum if $\phi \in \Phi$  (or supremum if $\phi\in \Psi$) over $\mathcal{D}$, one gets  $$\Orliczdual(P\circ T)= \Orliczdual(P).$$ In fact, we have proved the following result, which asserts that both $\Orliczdual(\cdot)$ and $\OrliczdualG(\cdot)$ are invariant under the volume preserving (invertible) linear transforms. 

\begin{thm} Let $T$ be a linear transform on $\mathbb R^n$ with determinant to be $\pm 1$.  For any $P\in \cF^+$, one has, $$\Orliczdual(P\circ T)= \Orliczdual(P)$$ where $P\circ T\in \cF^+$ is the measure with density function $p\circ T(x)=p(Tx)$ for all $x\in \mathbb R^n$; and $$\OrliczdualG(P\circ T)= \OrliczdualG(P).$$ \end{thm}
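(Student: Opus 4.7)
The plan is to reduce the theorem to the three change-of-variables identities already established in the two paragraphs immediately preceding the statement, namely: for every $p\in\cM^+$ one has $(p\circ T)^\circ(y)=p^\circ(T^{-t}y)$, $\mu(p\circ T)=\mu(p)$, $\mu\big((p\circ T)^\circ\big)=\mu(p^\circ)$, and for every $P,Q\in\cF$, $D_\phi(Q\circ T,P\circ T)=D_\phi(Q,P)$. Given these, the proof is essentially a bookkeeping exercise: one needs to verify that the substitution $Q\mapsto Q\circ T$ leaves the feasible set in Definition~\ref{dOai} invariant and does not alter the scaling constant $\mu(q^\circ)/\mu(\gamma_n)$ appearing inside the $f$-divergence.

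My first step is to observe that the map $\Theta:Q\mapsto Q\circ T$ is a bijection of $\cD$ onto itself. Indeed, its inverse is $Q\mapsto Q\circ T^{-1}$, and the identity $\mu(q\circ T)\cdot\mu\big((q\circ T)^\circ\big)=\mu(q)\cdot\mu(q^\circ)$ shows that the defining inequality of $\cD$ is preserved. Second, I would note that the scaling factor is invariant under $\Theta$: writing $q'=q\circ T$, one has $\mu(q'^{\,\circ})/\mu(\gamma_n)=\mu(q^\circ)/\mu(\gamma_n)$. Third, scalar multiplication of a measure commutes with precomposition by $T$, i.e.\ $c(Q\circ T)=(cQ)\circ T$ for $c>0$, since both measures have density $c\cdot(q\circ T)$.

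Combining these, in the definition
\[
\Orliczdual(P\circ T)=\inf_{Q\in\cD}D_\phi\!\left(\tfrac{\mu(q^\circ)}{\mu(\gamma_n)}Q,\,P\circ T\right),
\]
I substitute $Q=Q'\circ T$, which ranges over all of $\cD$. The integrand becomes
\[
D_\phi\!\left(\tfrac{\mu(q'^{\,\circ})}{\mu(\gamma_n)}(Q'\circ T),\,P\circ T\right)
=D_\phi\!\left(\Big(\tfrac{\mu(q'^{\,\circ})}{\mu(\gamma_n)}Q'\Big)\!\circ T,\,P\circ T\right)
=D_\phi\!\left(\tfrac{\mu(q'^{\,\circ})}{\mu(\gamma_n)}Q',\,P\right),
\]
using first the commutation of scalar multiplication with $\circ T$ and then the $f$-divergence invariance. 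Taking the infimum over $Q'\in\cD$ yields $\Orliczdual(P)$, and the case $\phi\in\Psi$ follows identically with $\inf$ replaced by $\sup$. For the geominimal version, the only additional point to check is that $\Theta$ also preserves $\mathcal{L}_c$: if $q=e^{-\psi}$ with $\psi$ convex, then $q\circ T=e^{-\psi\circ T}$ and $\psi\circ T$ remains convex because $T$ is linear; hence $\cD\cap\mathcal{L}_c$ is $\Theta$-invariant and the identical argument gives $\OrliczdualG(P\circ T)=\OrliczdualG(P)$.

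Since all the substantive identities are already in hand, there is no genuine obstacle; the only point that requires care is keeping track of the fact that the scaling constant in front of $Q$ depends on $Q$ itself, so the substitution must be made before pulling the constant through the $D_\phi$-invariance. Once this is handled, the theorem falls out immediately.
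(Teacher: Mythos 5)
Your proof is correct and follows essentially the same route as the paper: establish the change-of-variables identities $(p\circ T)^\circ = p^\circ\circ T^{-t}$, $\mu(p\circ T)=\mu(p)$, $\mu((p\circ T)^\circ)=\mu(p^\circ)$, and $D_\phi(Q\circ T,P\circ T)=D_\phi(Q,P)$, then push the substitution $Q\mapsto Q\circ T$ through the infimum (or supremum). You are slightly more explicit than the paper in two places — checking that the $Q$-dependent scaling factor $\mu(q^\circ)/\mu(\gamma_n)$ is itself $T$-invariant and commutes past $\circ T$, and verifying that precomposition by a linear map preserves $\mathcal L_c$ — but these are exactly the details the paper's terse ``taking the infimum over $\mathcal D$'' step is implicitly relying on, so the substance is the same.
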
  

The functional affine isoperimetric inequality aims to provide upper and/or lower bounds for an affine invariant functional defined on functions. Here, an affine invariant functional $\mathcal{G}: \cM^+\rightarrow \mathbb R$  is a functional such that $$\mathcal{G}(p)=\mathcal{G}(p\circ T)$$ for all $p\in \cM^+$ and for all invertible linear transform $T$ on $\mathbb R^n$ with determinant $\pm 1$. For example, $\mu(p)\mu(p^\circ)$ is  an affine invariant functional, and the celebrated functional Blaschke-Santal\'{o} inequality (\ref{function--1-ss-1}) is  a typical example of the functional affine isoperimetric inequality.

 Another example of such affine invariant functionals is $$\mathcal{G}(p)=\Orliczdual(P). $$ The following functional affine isoperimetric inequality provides upper and/or lower bounds for $\Orliczdual(P)$.  
\begin{thm}\label{isoine} For $\phi \in \Phi$, one has, 
 \begin{eqnarray*}\OrliczdualG(P)\geq  \Orliczdual(P)\geq \Orliczdual (\gamma_n\circ c)=\OrliczdualG (\gamma_n\circ c), 
 \end{eqnarray*}  where $c>0$ is the constant determined by $$c=\left(\frac{\mu(\gamma_n)}{\mu(p)}\right)^{1/n}.$$ The inequalities hold for $\phi \in \Psi$ with `` $\geq$" replaced by `` $\leq$". 
\end{thm}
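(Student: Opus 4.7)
The plan is to prove the chain from left to right. The leftmost inequality $\OrliczdualG(P)\geq\Orliczdual(P)$ is immediate from Definition \ref{dOai}, since $\cD\cap\mathcal{L}_c\subset\cD$ and taking an infimum over a smaller set yields a larger value (the reverse for $\phi\in\Psi$ where one takes a supremum). The rightmost equality $\Orliczdual(\gamma_n\circ c)=\OrliczdualG(\gamma_n\circ c)$ was already established in the paper via formula (\ref{equlity-gaussion}), so the whole task reduces to showing $\Orliczdual(P)\geq\Orliczdual(\gamma_n\circ c)$ for $\phi\in\Phi$ (and with the reverse inequality for $\phi\in\Psi$).

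To prove this middle inequality, I would fix an arbitrary $Q\in\cD$ with density $q$, set $\lambda=\mu(q^\circ)/\mu(\gamma_n)$, and estimate $D_\phi(\lambda Q,P)$ directly. Treating $p(x)/\mu(p)$ as a probability density on $\mathbb{R}^n$ and applying Jensen's inequality to the convex function $\phi\in\Phi$ gives
\begin{equation*}
D_\phi(\lambda Q,P)=\mu(p)\int_{\mathbb{R}^n}\phi\!\left(\frac{\lambda q(x)}{p(x)}\right)\frac{p(x)}{\mu(p)}\,dx\geq \mu(p)\cdot\phi\!\left(\frac{\lambda\,\mu(q)}{\mu(p)}\right)=\mu(p)\cdot\phi\!\left(\frac{\mu(q)\mu(q^\circ)}{\mu(p)\mu(\gamma_n)}\right).
\end{equation*}
Since $Q\in\cD$, the defining inequality $\mu(q)\mu(q^\circ)\leq[\mu(\gamma_n)]^2$ gives $\mu(q)\mu(q^\circ)/(\mu(p)\mu(\gamma_n))\leq\mu(\gamma_n)/\mu(p)$, and then the fact that $\phi\in\Phi$ is \emph{decreasing} yields $\phi(\mu(q)\mu(q^\circ)/(\mu(p)\mu(\gamma_n)))\geq\phi(\mu(\gamma_n)/\mu(p))$. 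Taking the infimum over $Q\in\cD$ and recognizing the right-hand side as $(\sqrt{2\pi}/c)^n\cdot\phi(c^n)=\Orliczdual(\gamma_n\circ c)$ via the identities $c^n=\mu(\gamma_n)/\mu(p)$ and $(\sqrt{2\pi}/c)^n=\mu(p)$ closes the argument.

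For $\phi\in\Psi$ the inequalities reverse at two compatible places: Jensen's inequality for concave $\phi$ flips the first estimate, and since $\phi\in\Psi$ is \emph{increasing}, the bound $\mu(q)\mu(q^\circ)\leq[\mu(\gamma_n)]^2$ flips the second estimate in the same direction. Taking suprema instead of infima over $\cD$ gives $\Orliczdual(P)\leq\Orliczdual(\gamma_n\circ c)$, consistent with the claim. I don't expect a genuine obstacle here: the whole proof is a two-line Jensen estimate combined with the built-in structural property of the admissible class $\cD$ and the monotonicity of $\phi$. The only subtlety worth flagging is that the two uses of monotonicity/convexity of $\phi$ must point the same way, which is exactly why $\Phi$ couples ``decreasing" with ``convex" and $\Psi$ couples ``increasing" with ``concave"—this coupling is precisely what makes the Gaussian the extremizer.
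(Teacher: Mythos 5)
Your proof is correct and follows essentially the same route as the paper: a Jensen estimate with the probability density $p/\mu(p)$, followed by the defining constraint $\mu(q)\mu(q^\circ)\le[\mu(\gamma_n)]^2$ for $Q\in\cD$ combined with the monotonicity of $\phi$, and finally the identification of the resulting bound with $\Orliczdual(\gamma_n\circ c)$ via formula (\ref{equlity-gaussion}). The observations that the leftmost inequality is a set-inclusion fact and the rightmost equality is already established are exactly how the paper handles those pieces as well.
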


\begin{proof} Note that the function $\phi\in \Phi$ is  decreasing and strictly convex. Jensen's inequality implies that  \begin{eqnarray*} \Orliczdual(P)&=&\inf_{Q\in \cD}
	D_{\phi}\bigg(\frac{\mu(q^\circ)}{\mu(\gamma_n)} Q, P\bigg)
 \\ &\geq& \mu(p)  \inf_{Q\in \cD} \phi\bigg(\frac{\mu(q) \mu(q^\circ) }{\mu(\gamma_n) \mu(p)}\bigg)\\ &=&    \mu(p)  \phi\bigg(\frac{\mu(\gamma_n)  }{\mu(p)}\bigg) \\ &=&  \phi(c^n)  c^{-n} \mu(\gamma_n)\\ &=& \Orliczdual(\gamma_n\circ c)  \end{eqnarray*}  where the second equality follows from the fact that $\phi$ is decreasing and $\mu(q)\mu(q^\circ)\leq \mu(\gamma_n)^2$, and  the last equality follows from formula (\ref{equlity-gaussion}).  
 
For $\phi\in \Psi$, which is  increasing and strictly concave, Jensen's inequality implies that  \begin{eqnarray*} \Orliczdual(P)&=&\sup_{Q\in \cD}
	D_{\phi}\bigg(\frac{\mu(q^\circ)}{\mu(\gamma_n)} Q, P\bigg)
 \\ &\leq& \mu(p)  \sup_{Q\in \cD} \phi\bigg(\frac{\mu(q) \mu(q^\circ) }{\mu(\gamma_n) \mu(p)}\bigg)\\ &=&    \mu(p) \phi\bigg(\frac{\mu(\gamma_n)  }{\mu(p)}\bigg) \\ &=& \Orliczdual(\gamma_n\circ c)  \end{eqnarray*}  where the second equality follows from the fact that $\phi$ is increasing and $\mu(q)\mu(q^\circ)\leq \mu(\gamma_n)^2$, and  the last equality follows from formula (\ref{equlity-gaussion}).  \end{proof} 
  
    Theorem \ref{isoine} states that, among all measures  $P\in \cF^+$,  the dual functional Orlicz affine and geominimal surface areas for $\phi\in \Phi$ attain their minimums at the Gaussian measures; while if $\phi\in \Psi$, their maximums are attained at the Gaussian measures.

The following functional affine isoperimetric inequality provides an upper bound for $\Orliczdual(P)$.  It states that, among all measures  $P\in \mathcal{D}$,  the dual functional Orlicz affine surface area for $\phi\in \Phi$ attain its maximum at the Gaussian measures.

 \begin{thm}\label{reverse-111-111--11} For measures $P\in \mathcal{D}$ and for $\phi\in \Phi$, one has,  \begin{eqnarray*} \Orliczdual(P)\leq \Orliczdual (\gamma_n\circ c_1), 
 \end{eqnarray*}  where $c_1>0$ is the constant determined by $$c_1=\left(\frac{\mu(p^\circ)}{\mu(\gamma_n)}\right)^{1/n}.$$   
 \end{thm}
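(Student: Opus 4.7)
The plan is to prove the upper bound on $\Orliczdual(P)$ by exhibiting one convenient competitor $Q\in\cD$ in the infimum, rather than attacking the infimum directly. The decisive observation is that since $P\in\cD$ by hypothesis, we are allowed to take $Q=P$ itself — a choice that collapses the density ratio appearing inside the $f$-divergence to the constant $1$ and reduces the whole bound to an algebraic comparison.

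Carrying this out, with $Q=P$ one has $\mu(q^\circ)/\mu(\gamma_n)=\mu(p^\circ)/\mu(\gamma_n)=c_1^n$ by the definition of $c_1$, so
$$D_{\phi}\!\left(\tfrac{\mu(p^\circ)}{\mu(\gamma_n)}\,P,\,P\right) \;=\; \int_{\mathbb{R}^n}\phi(c_1^n)\,p(x)\,dx \;=\; \phi(c_1^n)\,\mu(p).$$
Since this is one admissible value of the functional being infimized, it follows at once from Definition \ref{dOai} that $\Orliczdual(P)\le \phi(c_1^n)\,\mu(p)$.

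To finish, I would identify the right-hand side as $\Orliczdual(\gamma_n\circ c_1)$. Formula (\ref{equlity-gaussion}) derived earlier in the paper can be rewritten as $\Orliczdual(\gamma_n\circ c_1)=\phi(c_1^n)\,\mu(\gamma_n\circ c_1)$, and the scaling computation $\mu(\gamma_n\circ c_1)=\mu(\gamma_n)/c_1^n$ combined with the definition of $c_1$ gives $\mu(\gamma_n\circ c_1)=\mu(\gamma_n)^2/\mu(p^\circ)$. Because $\phi(c_1^n)>0$, the remaining inequality $\phi(c_1^n)\mu(p)\le \phi(c_1^n)\mu(\gamma_n\circ c_1)$ is equivalent to $\mu(p)\,\mu(p^\circ)\le \mu(\gamma_n)^2$, which is exactly the defining membership condition $P\in\cD$. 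Chaining the two inequalities yields the claim.

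There is no real technical obstacle in this argument; the only hurdle is conceptual, namely spotting that the correct competitor in the infimum is $Q=P$ itself. This is mildly counterintuitive in view of Theorem \ref{isoine}, where the natural extremizer is a rescaled Gaussian; here, by contrast, the Blaschke-Santal\'o-type inequality encoded in the set $\cD$ is what does all the work, and choosing $Q=P$ is precisely what unlocks it.
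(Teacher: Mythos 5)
Your proposal is correct and follows essentially the same argument as the paper: choose $Q=P$ as the competitor in the infimum, evaluate the $f$-divergence to get $\phi(c_1^n)\,\mu(p)$, invoke $\mu(p)\mu(p^\circ)\le\mu(\gamma_n)^2$ (the membership condition $P\in\cD$), and identify the resulting bound with $\Orliczdual(\gamma_n\circ c_1)$ via formula (\ref{equlity-gaussion}). The only difference is cosmetic rearrangement of the final algebraic step.
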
 
\begin{proof} Let  $\phi\in \Phi$. By (\ref{dOai1}) and $P\in \mathcal{D}$,  one has,   \begin{eqnarray*} \Orliczdual(P)&=&\inf_{Q\in \cD}
	D_{\phi}\bigg(\frac{\mu(q^\circ)}{\mu(\gamma_n)} Q, P\bigg)
 \\ &\leq& D_{\phi}\bigg(\frac{\mu(p^\circ)}{\mu(\gamma_n)} P, P\bigg)\\ &=&    \mu(p)  \phi\bigg(\frac{\mu(p^\circ)}{\mu(\gamma_n)  }\bigg) \\ &\leq &  \phi(c_1^n)  c_1^{-n} \mu(\gamma_n)\\ &=& \Orliczdual(\gamma_n\circ c_1)  \end{eqnarray*}  where the first inequality follows by letting $Q=P$, the second inequality follows from $\mu(q)\mu(q^\circ)\leq \mu(\gamma_n)^2$, and  the last equality follows from formula (\ref{equlity-gaussion}).   \end{proof}

Along the same lines, we can prove the following functional affine isoperimetric inequality for $\OrliczdualG(P)$.  It states that, among all log-concave measures  $P\in \mathcal{D}$,  the dual functional Orlicz geominimal surface area for $\phi\in \Phi$ attain its maximum at the Gaussian measures. \begin{thm} \label{reverse-111-111--22} Let $P\in \mathcal{D}$ be a log-concave measure whose density function $p\in \cM^+$ is a log-concave function.   Then, for $\phi\in \Phi$, one has,  \begin{eqnarray*} \Orliczdual(P)\leq \OrliczdualG(P) \leq \OrliczdualG (\gamma_n\circ c_1), 
 \end{eqnarray*}  where $c_1>0$ is the constant given in Theorem  \ref{reverse-111-111--11}.  \end{thm}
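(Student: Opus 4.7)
The plan is to transpose the proof of Theorem \ref{reverse-111-111--11} essentially verbatim, with the sole modification that the feasible set in the infimum is $\cD\cap\mathcal{L}_c$ instead of $\cD$, and to exploit the hypothesis that $P$ is log-concave precisely to guarantee that $P$ itself is an admissible test measure.

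First, the inequality $\Orliczdual(P)\le \OrliczdualG(P)$ for $\phi\in\Phi$ was already observed in the discussion following Definition \ref{dOai}: it is an immediate consequence of the fact that the geominimal surface area is an infimum over the smaller class $\cD\cap\mathcal{L}_c$, so the infimum is at least as large as that defining $\Orliczdual(P)$.

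For the main inequality $\OrliczdualG(P)\le \OrliczdualG(\gamma_n\circ c_1)$, I would proceed in three short steps. (i) Since $P\in\cD$ is log-concave, its density $p$ lies in $\cD\cap\mathcal{L}_c$, so $Q=P$ is admissible in the infimum defining $\OrliczdualG(P)$. Substituting this choice and computing the $f$-divergence of a scalar multiple of $P$ against $P$ yields
\begin{eqnarray*}
\OrliczdualG(P)\le D_{\phi}\!\left(\frac{\mu(p^\circ)}{\mu(\gamma_n)}\,P,P\right)=\mu(p)\,\phi\!\left(\frac{\mu(p^\circ)}{\mu(\gamma_n)}\right).
\end{eqnarray*}
(ii) Because $P\in\cD$, the functional Blaschke--Santal\'o-type constraint $\mu(p)\mu(p^\circ)\le\mu(\gamma_n)^2$ gives $\mu(p)\le c_1^{-n}\mu(\gamma_n)$, where $c_1^n=\mu(p^\circ)/\mu(\gamma_n)$. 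Since $\phi(c_1^n)>0$, multiplying by $\phi(c_1^n)$ yields
\begin{eqnarray*}
\mu(p)\,\phi(c_1^n)\le c_1^{-n}\mu(\gamma_n)\,\phi(c_1^n).
\end{eqnarray*}
(iii) Finally, the right-hand side equals $\OrliczdualG(\gamma_n\circ c_1)$ by the Gaussian evaluation (\ref{equlity-gaussion}) (whose analogue for $\OrliczdualG$ was recorded immediately afterwards in the paper). Chaining the three estimates together closes the argument.

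There is no genuine obstacle here; the proof is structurally identical to Theorem \ref{reverse-111-111--11}, and the only subtlety worth flagging is the admissibility check at step (i). That check is precisely why the log-concavity hypothesis on $P$ is needed: without it, $p$ might fail to belong to $\cD\cap\mathcal{L}_c$, and the test $Q=P$ would be illegal in the geominimal infimum. Once this admissibility is secured, steps (ii) and (iii) are purely monotonicity and a direct Gaussian computation, exactly as in Theorem \ref{reverse-111-111--11}.
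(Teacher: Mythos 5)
Your proof is correct and is exactly the argument the paper intends; the paper simply remarks that Theorem \ref{reverse-111-111--22} follows ``along the same lines'' as Theorem \ref{reverse-111-111--11}, and you have filled this in faithfully, correctly identifying that the log-concavity hypothesis is precisely what makes $Q=P$ admissible in the infimum over $\cD\cap\mathcal{L}_c$ defining $\OrliczdualG(P)$.
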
 
 
When $\phi: (0, \infty)\rightarrow (0, \infty)$ is a strictly convex function but $\phi \notin \Phi$ (hence $\phi$ is not decreasing), one can still define the dual functional Orlicz affine surface area of $P\in \cF^+$ by
\begin{eqnarray*}
	\Orliczdual(P)=
	\inf_{	Q\in \cD}
	D_{\phi}\bigg(\frac{\mu(q^\circ)}{\mu(\gamma_n)} Q, P\bigg),
	\end{eqnarray*}  and the dual functional Orlicz geominimal surface area of $P\in \cF^+$ with  $\cD$ replaced by $\cD\cap \mathcal{L}_c$. These functionals are again affine invariant, but we are not able to calculate $\Orliczdual(\gamma_n\circ c)$ and $\OrliczdualG(\gamma_n\circ c)$ precisely. However, we are still able to prove the following functional affine isoperimetric inequalities, whose proofs follow along the same lines as those in Theorem  \ref{reverse-111-111--11}. 
	 \begin{thm} Let  $\phi: (0, \infty)\rightarrow (0, \infty)$ be a strictly convex function but $\phi\notin \Phi$. For measures $P\in \mathcal{D}$, one has,  \begin{eqnarray*} \Orliczdual(P) &\leq&   \mu(p)  \phi\bigg(\frac{\mu(p^\circ)}{\mu(\gamma_n)  }\bigg)\\ &\leq &  \phi(c_1^n)  c_1^{-n} \mu(\gamma_n),  \end{eqnarray*} where $c_1>0$ is the constant given in Theorem  \ref{reverse-111-111--11}. 
	 
	   These inequalities also hold for the dual functional Orlicz geominimal surface area if  in addition $P\in \mathcal{D}$ is a log-concave measure. 
 \end{thm}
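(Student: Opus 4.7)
The plan is to mirror the proof of Theorem \ref{reverse-111-111--11} step by step, noting that the hypothesis $\phi\in\Phi$ was used there only at the very last step, when the bound $\phi(c_1^n)c_1^{-n}\mu(\gamma_n)$ was identified with $\Orliczdual(\gamma_n\circ c_1)$ via formula (\ref{equlity-gaussion}). The earlier inequalities in that chain do not use monotonicity or the Jensen direction at all: they only use that $\phi$ is positive and the defining property of $\mathcal{D}$. Dropping the identification leaves the chain of inequalities intact, which is exactly what the present theorem asserts.

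Concretely, since $P\in\mathcal{D}\subset\cF^{+}$, I would plug $Q=P$ into the infimum appearing in Definition \ref{dOai} to obtain
\begin{equation*}
\Orliczdual(P)\leq D_{\phi}\bigg(\frac{\mu(p^\circ)}{\mu(\gamma_n)}P,\,P\bigg)=\mu(p)\,\phi\bigg(\frac{\mu(p^\circ)}{\mu(\gamma_n)}\bigg),
\end{equation*}
where the equality is a direct evaluation of the $f$-divergence (the argument of $\phi$ is a constant in $x$). This gives the first inequality without using anything about $\phi$ beyond its being defined on $(0,\infty)$. For the second inequality, set $c_1^n=\mu(p^\circ)/\mu(\gamma_n)$, so that the assumption $P\in\mathcal{D}$ reads $\mu(p)\mu(p^\circ)\leq[\mu(\gamma_n)]^2$, i.e. $\mu(p)\leq c_1^{-n}\mu(\gamma_n)$. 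Since $\phi$ takes positive values, multiplying this inequality by $\phi(c_1^n)>0$ preserves its direction and yields $\mu(p)\phi(c_1^n)\leq \phi(c_1^n)c_1^{-n}\mu(\gamma_n)$, as required.

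For the geominimal analogue, the same argument applies verbatim, the only thing to check being that $Q=P$ is an admissible test measure in the infimum defining $\OrliczdualG(P)$, which now ranges over $\mathcal{D}\cap\mathcal{L}_c$. This is ensured by the additional hypothesis that $p$ is log-concave, so $P\in\mathcal{D}\cap\mathcal{L}_c$, and the two inequalities transfer without change.

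I do not anticipate a genuine obstacle here; the proof is essentially a streamlined version of the proof of Theorem \ref{reverse-111-111--11}. The only conceptual point worth flagging is that strict convexity of $\phi$ plays no role in either inequality: it is the \emph{positivity} of $\phi$ (together with $P\in\mathcal{D}$) that drives the second bound, and the first bound is an equality obtained by choosing a specific test measure. Consequently the final bound cannot in general be identified with $\Orliczdual(\gamma_n\circ c_1)$ as in Theorem \ref{reverse-111-111--11}, which is precisely why the theorem is stated as a pair of inequalities rather than as an affine isoperimetric equality attained at the Gaussian.
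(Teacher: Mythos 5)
Your proposal is correct and takes essentially the same approach as the paper: the paper explicitly says the proofs "follow along the same lines as those in Theorem \ref{reverse-111-111--11}," and your argument is precisely that chain of inequalities (take $Q=P$ as a test measure, evaluate the $f$-divergence with constant argument, then use the defining inequality $\mu(p)\mu(p^\circ)\leq[\mu(\gamma_n)]^2$), stopping short of the final identification with $\Orliczdual(\gamma_n\circ c_1)$ that would require $\phi\in\Phi$. Your observation that only positivity of $\phi$ (and not strict convexity or monotonicity) is used is accurate and faithfully reflects why the paper can only state a chain of inequalities here rather than an affine isoperimetric inequality with Gaussian extremizer.
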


  \section{Closing Remarks} 
  
  This paper provides a functional analogue of the recently initiated dual Orlicz-Brunn-Minkowski theory for star bodies \cite{ghwy14, zzx14}.  With the help of the newly introduced Orlicz addition for measures, we are able to establish the dual functional Orlicz-Brunn-Minkowski inequality. Moreover,  we gave an interpretation for the famous Csisz\'{a}r's $f$-divergence.  Their applications and connections with geometry are also discussed. In particular, we are able to prove that  the dual functional Orlicz-Brunn-Minkowski inequality is equivalent to Jensen's inequality for integrals. 
  
 This paper further boosts the already existing connections between geometry and information theory. As explained in Subsection \ref{optimal-geominimal}, by choosing special measures and special set $\E$, we are able to translate fundamental geometric concepts into an optimization problem for the $f$-divergence.  In particular, we define the dual functional Orlicz affine and geominimal surface areas for functions, and establish related functional affine isoperimetric inequalities. As expected, these functional affine invariants for measures attain their minimums (or maximums) at the Gaussian measures under certain conditions on $\phi$. These functional affine isoperimetric inequalities are usually more important in applications.  
  
 Last but not the least,  the newly defined dual functional Orlicz affine and geominimal surface areas can be viewed as ``dual" concepts to the (Orlicz) affine and geominimal surface areas for log-concave functions \cite{ArtKlarSchuWer, Caglar-6, CaglarWerner, CaglarYe}. The latter ones are fundamental concepts in a rapidly developing field: geometrization of log-concave functions.

\section*{Acknowledgment}

The research of SH is supported by CSC. 
The research of DY is supported by a NSERC grant.

\ifCLASSOPTIONcaptionsoff
\newpage
\fi




\begin{thebibliography}{999}
	
	\bibitem{al61}
	
	R. Alfred, On measures of entropy and information,  Proc. 4th Berkeley Sympos. Math. Statist. and Prob., 1 (1961) 547-561.
	
	
	\bibitem{as66}
	
	M. Ali and D. Silvey, A general class of coefficients of divergence of one distribution from another, J. R. Stat. Soc. B, 28 (1966) 131-142.
	
	\bibitem {ArtKlarSchuWer}
{S. Artstein,  B. Klartag, C. Sch\"utt and E. Werner}, 
Functional affine-isoperimetry and an inverse logarithmic Sobolev inequality,
J. Funct. Anal.,  262 (2012) 4181-4204.

	
	\bibitem{akm04}
	
	S. Artstein, B. Klartag and V. Milman, The Santal\'{o} point of a function, and a functional form of the Santal\'{o} inequality,
	Mathematika, 51 (2004) 33-48.
	
	
	\bibitem{bgv90}
	
	A. Barron, L. Gy\"{o}rfi and E. van der Meulen, Distribution estimates consistent in total variation and two types of information divergence,
	IEEE Trans. Inform. Theory, 38 (1990) 1437-1454.
	
	\bibitem{bh43}
	
	A. Bhattacharyya, On a measure of divergence between two statistical populations
	defined by their probability distributions, Bull. Calcutta Math. Soc.,
	35 (1943) 99-109.
	
	\bibitem {Blas} W.\ Blaschke,  Vorlesungen \"{u}ber Differentialgeometrie II, Affine Differentialgeometrie, Springer-Verlag, Berlin, 1923.
	
	
\bibitem{Caglar-6} 
{U.\ Caglar, M.\ Fradelizi, O.\ Guedon, J.\ Lehec, C.\ Schuett and E.\ Werner},  Functional versions of $L_p$-affine surface area and entropy inequalities, Int Math Res Notices, (2015)
doi: 10.1093/imrn/rnv151. 

\bibitem{CaglarWerner}
{U.\ Caglar and E.\ Werner}, Divergence for $s$-concave and log concave functions, Adv. Math.,  257 (2014)  219-247.


\bibitem{CaglarYe} 

U. Caglar and D. Ye, Orlicz Affine Isoperimetric Inequalities for Functions, preprint.  


	\bibitem{ct06}
	
	T. Cover and J. Thomas, Elements of information theory, second edition, Wiley-Interscience,
	Hoboken, NJ, 2006.
	
	\bibitem{c63}
	
	I. Csisz\'{a}r, Eine informationstheoretische Ungleichung und ihre Anwendung auf den Beweis der Ergodizit\"{a}t von Markoffschen Ketten, Publ.
	Math. Inst. Hungar. Acad. Sci. ser. A, 8 (1963) 84-108.
	
	\bibitem{c75}
	
	I. Csisz\'{a}r, $I$-Divergence Geometry of Probability Distributions and Minimization Problems, Ann. Probability, 3 (1975) 146-158.
	
	\bibitem{fm07}
	
	M. Fradelizi and M. Meyer, Some functional forms of Blaschke-Santal\'{o} inequality, Math. Z., 256 (2007) 379-395.
	 
	
	\bibitem{fn14}
	
	N. Frank and R. Nock, On the Chi Square and Higher-Order Chi Distances for Approximating f-Divergences, IEEE Signal Processing Letters, 21 (2014) 10-13.
	
	
	\bibitem{fv06}
	
	L. Friedrich, and I. Vajda, On divergences and informations in statistics and information theory,  IEEE Trans. Inform. Theory, 52 (2006) 4394-4412.
	
	
  \bibitem{Gardner1994} 
R. Gardner,   A positive answer to the Busemann-Petty problem in three dimensions, Ann. of Math., 140 (1994) 435-447.

	\bibitem{gardner-b-m}
	
	R. Gardner, The Brunn-Minkowski inequality, Bull. Amer. Math. Soc., 39 (2002) 355-405. 
	
 
	\bibitem{ghw14}
	
	R. Gardner, D. Hug and W. Weil, The Orlicz-Brunn-Minkowski theory: A general framework, additions,
	and inequalities, J. Differential Geom., 97 (2014) 427-476.
	
	
	\bibitem{ghwy14}
	
	R. Gardner, D. Hug, W. Weil and D. Ye, The dual Orlicz-Brunn-Minkowski theory, J. Math. Anal. Appl., 430 (2015) 810-829.
	
	\bibitem{gk2016}
	
	R.  Gardner and M. Kiderlen, Operations between functions, preprint.
	
	 	
		\bibitem{GardnerKoldobski1999}
 R. Gardner, A. Koldobski and T. Schlumprecht, An analytic solution to the Busemann-Petty problem on sections of convex bodies, Ann. of Math., 149 (1999) 691-703.
 


		 \bibitem{HaberlParapatits2014}
 C. Haberl and L. Parapatits,  The Centro-Affine Hadwiger Theorem, J. Amer. Math. Soc., 27 (2014) 685-705. 
	
	\bibitem{ht01}
	
	P. Harremoes and F. Topsoe, Inequalities between entropy and the index of coincidence derived from information diagrams, IEEE Trans.
	Inform. Theory, 47 (2001) 2944-2960.
	 
	
	\bibitem{kl51}
	
	S. Kullback and R. Leibler, On information and sufficiency, Ann. Math. Statist., 22 (1951) 79-86.
	
	 	
\bibitem{Lehec2009b}
J. Lehec, Partitions and functional Santal\'o inequalities,  Arch. Math. (Basel), 92 (2009)  89-94.
	
	 	\bibitem{lv06}
	
	F. Liese and I. Vajda, On Divergences and Information in Statistics and Information Theory, IEEE Trans.
	Inform. Theory, 52 (2006) 4394-4412.
	
	\bibitem{Ludwig2009} {M. Ludwig,}  General affine surface areas, Adv. Math., 224
(2010) 2346-2360.

\bibitem{LudR}
{M. Ludwig and M. Reitzner},   A characterization of affine
surface area, Adv. Math., 147 (1999) 138-172.

\bibitem{LR1}
{M. Ludwig and M. Reitzner,}  A classification of $SL(n)$
invariant valuations, Ann. of  Math., 172 (2010) 1223-1271.

\bibitem{Lut1988}
{E. Lutwak,} Intersection bodies and dual mixed volume,  Adv. Math., 71 (1988) 232-261.

	\bibitem{Lu1}{E. Lutwak}, The Brunn-Minkowski-Firey theory. II. Affine and
geominimal surface areas, Adv. Math., 118 (1996) 244-294.	


	\bibitem{m63}
	
	T. Morimoto, Markov processes and the H-theorem, J. Phys. Soc. Jap., 18 (1963) 328-331.
	
	\bibitem{ov03}
	
	F. \"{O}sterreicher and I. Vajda, A new class of metric divergences on probability spaces and its applicability in statistics, Ann. Inst. Statist.
	Math., 55 (2003) 639-653.
	
	\bibitem {Petty74} C.M. Petty,  Geominimal surface area, Geom. Dedicata, 3 (1974) 77-97.


	\bibitem{s14}
	
	R. Schneider, Convex Bodies: The Brunn-Minkowski Theory, second edition, Cambridge University
	Press, Cambridge, 2014.
	
	\bibitem {SWerner} C. Sch\"{u}tt and E. Werner,  Surface bodies and p-affine surface area, Adv. Math., 187 (2004)
98-145.
	
	
	\bibitem{w12}
	
	E. Werner, R\'{e}nyi divergence and $L_p$-affine surface area for convex bodies, Adv. Math., 230 (2012)
	1040-1059.
	
	\bibitem{xjl14}
	
	D. Xi, H. Jin and G. Leng, The Orlicz Brunn-Minkowski inequality, Adv. Math., 260 (2014) 350-374.
	
		
	
	 	\bibitem{y15}
	D. Ye, New Orlicz Affine Isoperimetric Inequalities, J. Math. Anal. Appl., 427 (2015) 905-929.
	
	 \bibitem{Ye2014a1} D. Ye,   $L_p$ Geominimal surface areas and their inequalities,  Int. Math. Res. Notes, 2015 (2015) 2465-2498.
	
	
	\bibitem{y14a}
	
	D. Ye, Dual Orlicz-Brunn-Minkowski theory: dual Orlicz $L_{\phi}$ affine and
	geominimal surface areas, J. Math. Anal. Appl., in press.

 \bibitem{Zhang1999} 
   {G. Zhang}, A positive answer to the Busemann-Petty problem in four dimensions, Ann. of Math., 149 (1999) 535-543.
   
	\bibitem{zzx14}
	
	B. Zhu, J. Zhou and W. Xu, Dual
	Orlicz-Brunn-Minkowski theory,
	Adv. Math., 264 (2014) 700-725.
	
	
	 \end{thebibliography}
%

\vskip 8mm
\noindent Shaoxiong Hou:  Department of Mathematics and Statistics, Memorial University, St. John's, NL A1C 5S7, Canada. {\tt email: s.hou@mun.ca}
\vskip 2mm
\noindent Deping Ye:  Department of Mathematics and Statistics, Memorial University, St. John's, NL A1C 5S7, Canada. {\tt email: deping.ye@mun.ca}

\end{document}